\pgfplotsset{width=10cm,compat=1.9}
\DeclarePairedDelimiterX{\bignorm}[1]{\bigg\Vert}{\bigg\Vert}{#1}
\DeclarePairedDelimiterX{\norm}[1]{\lVert}{\rVert}{#1}
\def\@setauthors{%
  \begingroup
  \def\thanks{\protect\thanks@warning}%
  \trivlist
  \centering\footnotesize \@topsep30\p@\relax
  \advance\@topsep by -\baselineskip
  \item\relax
  \author@andify\authors
  \def\\{\protect\linebreak}

  \normalsize\lowercase{\authors}%
  
	\ifx\@empty\contribs
  \else
    ,\penalty-3 \space \@setcontribs
    \@closetoccontribs
  \fi
  \endtrivlist
  \endgroup
}
\def\@settitle{\begin{center}
\LARGE\lowercase{\@title}
  \end{center}%
}
\definecolor{lightblue}{HTML}{2B77A4}
\definecolor{darkred}{HTML}{9E0D0D}
\numberwithin{equation}{section}
\newtheorem{thm}{Theorem}[section]
\newtheorem{lma}[thm]{Lemma}
\newtheorem{cor}[thm]{Corollary}
\newtheorem{defn}[thm]{Definition}
\newtheorem{prop}[thm]{Proposition}
\newtheorem{conj}[thm]{Conjecture}
\newtheorem*{propA}{Proposition A}
\renewcommand{\epsilon}{\varepsilon}
\theoremstyle{definition}
\newtheorem{definition}{Definition}[section]
\theoremstyle{conjecture}
\newcommand{\pare}[1]{\left( #1 \right)}
\newcommand{\paree}[1]{\left\{ #1 \right\}}
\newcommand{\mm}[1]{\[ #1 \]}
\newcommand{\abso}[1]{\left| #1 \right|}
\newcommand{\nn}[1]{\norm{ #1 }}
\newcommand{\intt}[2]{\int_{#1}^{#2}}
\newcommand{\bigcupp}[2]{\bigcup_{#1}^{#2}}
\newcommand{\nnn}[3]{\left\Vert #1 \right\Vert_{#2}^{#3}}
\newcommand{\bignnn}[3]{\bignorm{ #1 }_{#2}^{#3}}
\newcommand{\summ}[2]{\sum_{#1}^{#2}}
\newcommand{\RR}{\mathbb{R}}
\newcommand{\TT}{\mathbb{T}}
\newcommand{\cK}{\mathcal{K}}
\newcommand{\cM}{\mathcal{M}}
\newcommand{\BB}{\mathcal{B}}
\newcommand{\EE}{\mathcal{E}}
\newcommand{\FF}{\mathcal{F}}
\newcommand{\McK}[1]{\mathcal{K}_{\delta,A}(#1)}
\newcommand{\tube}[2]{T^{\delta}_{#1}(#2)}
\newcommand{\ubox}{\overline{\textup{dim}}_\textup{B}}
\newcommand{\lbox}{\underline{\textup{dim}}_\textup{B}}
\newcommand{\pacd}{\textup{dim}_\textup{P}}
\newcommand{\haus}{{\textup{dim}}_\textup{H}}
\renewcommand{\geq}{\geqslant}
\renewcommand{\leq}{\leqslant}
\newcommand{\ubd}{\overline{\dim}_{\textup{B}}}
\newcommand{\pd}{\dim_{\textup{P}}}
\title{Hausdorff dimension of restricted Kakeya sets}
\author{ Jonathan M. Fraser${}^{1}$ and Lijian Yang${}^{2}$\\ \\
 ${}^{1,2}$ School of Mathematics and Statistics, University of St Andrews, Scotland\\
\MakeLowercase{Emails: ${}^{1}$jmf32@st-andrews.ac.uk and ${}^{2}$ly51@st-andrews.ac.uk}}
\begin{document}


\maketitle
\thispagestyle{empty}

\begin{abstract}
A Kakeya set in $\RR^n$ is a compact set that contains a unit line segment $I_e$ in each direction $e \in S^{n-1}$. The Kakeya conjecture states that any Kakeya set in $\RR^n$ has Hausdorff dimension $n$. We consider a restricted case where the  midpoint of each line segment $I_e$ must belong to a fixed set $A$ with packing dimension at most  $s \in [0, n]$. In this case, we first show that the Hausdorff dimension of  the Kakeya set is at least $n - s$. Furthermore, using Bourgain's bush argument, we improve the lower bound to $\max \paree{n - s, n - g_n(s)}$, where $g_n(s)$ is defined inductively. For example, when $n = 4$, we prove that the Hausdorff dimension  is at least $\max\{\frac{19}{5} - \frac{3}{5}s,4-s\}$. We also establish Kakeya maximal function analogues of these results.
\\ \\ 
\emph{Mathematics Subject Classification 2020}: 28A80,  28A78.
\\
\emph{Key words and phrases}:  Kakeya conjecture, Kakeya set, Kakeya maximal function, box dimension, packing dimension, Hausdorff dimension.
\end{abstract}

\tableofcontents

\section{Introduction}

A \textit{Kakeya set} (or \textit{Besicovitch set}) is a compact subset of $\RR^n$ ($n \geq 2$) that contains a unit line segment in every direction. The formal definition is as follows: 

\begin{defn}
	A compact set $K \subseteq \RR^n$ is said to be a Kakeya set if, for all $e \in S^{n-1}$, there exists a point $a_e \in \RR^n$ such that 
	\[
	I_e(a_e) := \{a_e + t \cdot e : -1/2 \leq t \leq 1/2 \} \subseteq K.
	\]
	Here, $I_e(a_e)$ denotes the unit line segment in the direction $e$ with midpoint $a_e$. 
\end{defn}

In the early 20th century, Besicovitch showed the existence of a Kakeya set in $\RR^n$ with zero Lebesgue measure. Following this, researchers began studying finer properties of Kakeya sets, such as their Hausdorff and box-counting dimensions. For convenience, we provide the definitions of these dimensions below.

\begin{defn}
	Let $E$ be a bounded subset of $\RR^n$.
	\begin{enumerate}[(a)]
		\item The \textit{Hausdorff dimension} of $E$ is defined as
		\[
		\haus E = \inf\left\{\alpha : \forall \varepsilon > 0, \, \exists \{E_i\}_{i=1}^\infty \text{ such that } E \subseteq \bigcup_{i=1}^\infty E_i \text{ and } \sum_{i=1}^\infty \mathrm{diam}(E_i)^\alpha < \varepsilon \right\}.
		\]
		\item For any $\delta > 0$, let $N_\delta(E)$ denote the smallest number of sets with diameter at most $\delta$ needed to cover $E$. The \textit{lower box-counting dimension} of $E$ is defined as 
		\[
		\lbox E = \liminf_{\delta \to 0} \frac{\log N_\delta(E)}{-\log \delta}.
		\]
		Similarly, the \textit{upper box-counting dimension} of $E$ is defined as 
		\[
		\ubox E = \limsup_{\delta \to 0} \frac{\log N_\delta(E)}{-\log \delta}.
		\]
		\item Based on the box-counting dimension, the \textit{packing dimension} of $E$ is defined as
		\[
		\pacd E = \inf\left\{\sup_i \ubox E_i : E \subseteq \bigcup_{i=1}^\infty E_i\right\}.
		\]
	\end{enumerate}
\end{defn}

It is straightforward to verify the following relationships between these dimensions. For more details, see \cite{falconer} and \cite{M15}:
\begin{align*}
	&0 \leq \haus E \leq \pacd E \leq \ubox E, \\
	&0 \leq \haus E \leq \lbox E \leq \ubox E. \nonumber
\end{align*}
The \emph{Kakeya conjecture} asserts that (despite the existence of Kakeya sets of zero Lebesgue measure) Kakeya sets must be large in terms of dimension.

\begin{conj}[Kakeya conjecture] \label{kconjecture}
	For any Kakeya set $K$ in $\RR^n$, its Hausdorff dimension satisfies 
	\begin{equation}
		\haus K = n.\nonumber
	\end{equation}
	\end{conj}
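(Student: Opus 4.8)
The plan is to reduce the Hausdorff-dimension statement to a sharp quantitative estimate for the Kakeya maximal function and then to attack that estimate through the successive geometric, arithmetic, and algebraic techniques developed for this problem. First I would pass from the continuous set $K$ to its $\delta$-discretisation: covering $S^{n-1}$ by a maximal $\delta$-separated set of directions $\{e_j\}$ and thickening each segment $I_{e_j}(a_{e_j})$ to the $\delta$-tube $\tube{e_j}{a_{e_j}}$ of length $1$ and radius $\delta$, one obtains $\sim \delta^{-(n-1)}$ essentially distinct tubes, pointing in pairwise $\delta$-separated directions, all lying in a $\delta$-neighbourhood of $K$. A standard covering argument (see \cite{falconer}) shows that $\haus K \geq \alpha$ follows once one proves that $\bigcup_j \tube{e_j}{a_{e_j}}$ cannot be covered by substantially fewer than $\delta^{-\alpha}$ balls of radius $\delta$. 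Equivalently, it suffices to establish the Kakeya maximal function conjecture for the operator $f^*_\delta(e) = \sup_{a \in \RR^n} |\tube{e}{a}|^{-1} \int_{\tube{e}{a}} |f|$, namely $\| f^*_\delta \|_{L^n(S^{n-1})} \lesssim_{\epsilon} \delta^{-\epsilon} \|f\|_{L^n(\RR^n)}$, since testing this bound against indicators of unions of $\delta$-balls yields exactly the required incidence inequality between tubes and balls.

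The second stage is to prove that incidence estimate, and here I would proceed along the hierarchy of known methods, each improving the exponent $\alpha$. In the plane the statement is already a theorem (Davies), so the interest is $n \geq 3$. The bush argument of C\'ordoba — isolating a point lying in many tubes and using that tubes with $\delta$-separated directions fan out almost disjointly away from that point — gives $\alpha = (n+1)/2$; Wolff's hairbrush refinement, organised around a single tube met by many others together with a planarity count, improves this to $\alpha = (n+2)/2$. To push further I would translate near-extremisers into an arithmetic-combinatorial problem in the spirit of Bourgain and Katz--Tao: a configuration close to the hairbrush bound forces the slope and midpoint data to behave like an approximate arithmetic progression, and sum--product estimates contradict that rigidity, nudging the exponent past $(n+2)/2$ in every dimension (as in Katz--\L aba--Tao for $n=3$). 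Over finite fields Dvir's polynomial method settles the analogous question completely, and in $n=3$ one can hope to reach the sharp value $\alpha = 3$ by marrying polynomial partitioning with a multiscale sticky/plany/grainy structural dichotomy — the route recently carried through by Wang--Zahl.

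The main obstacle, and the reason the statement as written is still open for $n \geq 4$, is that none of these mechanisms is known to close the final gap up to $\alpha = n$ in general dimension. The polynomial method controls incidences between tubes and a low-degree variety, but in dimension four and higher the tubes can concentrate along higher-dimensional algebraic surfaces (the ``grains'' of Guth--Katz) in ways that current transversality and induction-on-scales arguments do not exclude; the conjectural extremisers are the \emph{sticky} Kakeya sets, where the tube through each point varies Lipschitz-continuously with direction, and linearising around them produces precisely the self-similar, scale-invariant object that defeats a naive bootstrap. A complete proof would therefore demand a genuinely new input: either a higher-dimensional analogue of the Wang--Zahl structural theorem that survives the extra degrees of freedom, or an arithmetic estimate strong enough to eliminate sticky configurations outright, and I do not expect the tools recalled above, in their present form, to suffice. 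The honest status of this proposal is thus that it reduces Conjecture \ref{kconjecture} to a sharp Kakeya maximal estimate and records the strongest lower bounds reachable by existing techniques, while the full equality $\haus K = n$ remains, for $n \geq 4$, an open problem; the theorems of the present paper should be read as unconditional partial progress under the additional midpoint restriction rather than as a resolution of the conjecture itself.
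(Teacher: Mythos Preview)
The statement you are attempting is not proved in the paper: it is Conjecture~\ref{kconjecture}, explicitly recorded as open for $n \geq 4$ (with the cases $n=2$ and $n=3$ attributed to Davies and to Wang--Zahl respectively). There is therefore no ``paper's own proof'' to compare against. Your proposal is not a proof either, and you say as much in your final paragraph; what you have written is an accurate and well-informed survey of the reduction to the maximal-function conjecture and of the bush, hairbrush, arithmetic, and polynomial-partitioning techniques, together with a candid assessment of why they fall short in dimension four and above.

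So the honest verdict is: there is no gap to name because there is no claimed proof, and there is no divergence of method because the paper offers none. Your closing sentence is exactly right --- the results of the present paper are unconditional lower bounds for the \emph{restricted} problem (midpoints confined to a set $A$ of small packing dimension), and they neither claim nor imply a resolution of Conjecture~\ref{kconjecture} itself. If anything, you might tighten the proposal by simply stating at the outset that the conjecture is open and then pointing to the restricted results (Corollaries~\ref{n-s}, \ref{n-gs}, \ref{generalcor_ub}, \ref{packing}) as the paper's actual contribution, rather than rehearsing the full history of partial progress on the unrestricted problem.
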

Davies    solved the planar case in 1971 \cite{D71}.  In 1991, Bourgain \cite{B91} used the ``bush argument" to show that a Kakeya set in $\RR^3$ has Hausdorff dimension at least $\frac{7}{3}$. He also extended his results to higher dimensions via induction. Later, Wolff \cite{W95} improved Bourgain's result in 1995, establishing a lower bound of $\frac{n+2}{2}$ for the Hausdorff dimension in the general $\RR^n$ case, using a method some refer to as the ``hairbrush argument". In particular, the ``hairbrush argument" demonstrated that the Hausdorff dimension of Kakeya sets in $\RR^3$ is at least $\frac{5}{2}$. In 2002, Katz and Tao \cite{KT02} further improved Wolff's bound to $(2-\sqrt{2})(n-4) + 3$ for  $n \geq 5$. 
Recently, Wang and Zahl \cite{WZ25} resolved the conjecture in $\mathbb{R}^3$; a major breakthrough in the field. The problem, however, remains open in $\mathbb{R}^n$ for $n \geq 4$.

One may think of a Kakeya set in the following way. Given any direction $e \in S^{n-1}$, there exists a translate $a_e \in \mathbb{R}^n$ such that  the unit line segment $\{t \cdot e: -1/2 \leq t \leq 1/2\}$ is translated into the set by $a_e$ (that is, for each direction, one is free to choose the midpoint of the line associated with that direction).  One of the difficulties in proving the Kakeya conjecture is that one has no control over the choice of midpoints.  In this paper we impose some control on the choice of midpoints (thus making the problem easier) by insisting they all belong to a given set $A$.  We ask:~what conditions on $A$ are needed to obtain better dimension bounds than the current state-of-the-art?  Intuitively, the smaller $A$ is, the more heavily the Kakeya set is restricted and we quantify the size of $A$ via dimension. One might expect  a lower bound for the Hausdorff dimension which tends to $n$ as the packing dimension of $A$ tends to zero and we do indeed achieve this.  In fact this can be done quite easily and we first present two simple direct arguments, the second of which was provided by Tam\'as Keleti.  The main aim of the paper is to beat these initial estimates, and we are able to do this via a modification of Bourgain's bush argument.
\begin{propA} \label{keleti}
	Let $K$ be a Kakeya set in $\RR^n$ and let $A$ be the collection of all the  midpoints of the unit line segments contained in $K$. Then
	\begin{enumerate}
		\item If $\ubox A \leq s$, then $\lbox K \geq n-s$.
		\item If $\pacd A \leq s$, then $\haus K \geq n-s$. 
	\end{enumerate}
\end{propA}
\begin{proof}
	\begin{enumerate}
		  \item Fix $ \varepsilon>0$ and let $\delta\in (0,1)$. Since $\ubox A \leq s$,   $N_{\delta}(A) \leq c_1 \delta^{-(s + \varepsilon)}$ for some constant $c_1>0$ independent of $\delta$. Let $S_{\delta}$ be a $100\delta$-separated subset of $S^{n-1}$ of size  at least $c_2 \delta^{1-n}$  for another constant $c_2>0$ independent of $\delta$. For each $e \in S_{\delta}$, there is a unit line segment $I_e(a_e)$ in $K$ with  midpoint $a_e \in A$. By the pigeonhole principle, there exists a ball $B$ of diameter $\delta$ that contains at least $\frac{c_2\delta^{1-n}}{N_{\delta}(A)} \geq \tfrac{c_2}{c_1} \delta^{1-n+s+\varepsilon}$ points in $\{ a_e : e \in S_{\delta} \}$. Therefore, using the separation of directions in $S_\delta$, the union of all the unit line segments  whose midpoints are contained in the ball $B$  needs at least $\tfrac{c_2}{2c_1}\frac{\delta^{1-n+s+\varepsilon}}{\delta} =\tfrac{c_2}{2c_1} \delta^{-n+s+\varepsilon}$ many sets of diameter $\delta$  to cover it.  The claimed lower bound  follows upon letting $\varepsilon \to 0$.
		\item   The set $ K - A $ contains a solid ball of radius $ 1/2$. Moreover, $K-A$ is a Lipschitz image (an orthogonal projection) of the Cartesian  product $K \times A$. Therefore, 
\[
n = \haus(K - A) \leq \haus(K \times A) \leq \haus K + \pacd A,
\]
where the last inequality is a general result about products, which can be found in  \cite{falconer, M95}.  The claimed lower bound  follows by rearranging. 
	\end{enumerate}
\end{proof}

Our main dimension result can be found in Corollary \ref{packing-arbitrary}; see also Corollary~\ref{n-s}, Corollary~\ref{n-gs}, Corollary~\ref{generalcor_ub}, and Corollary~\ref{packing}.  A special case of this considers the midpoints of the unit line segments and provides an estimate for the Hausdorff dimension of a restricted Kakeya set, which beats the lower bound $n - \pd A$ from Proposition A. The same result holds for any arbitrary points in each unit line segment.  Importantly it also beats the state-of-the-art bound for the general Kakeya conjecture in dimension at least 4  and the bound $ n-\pd A$ \emph{simultaneously}   for some range of $\pd A$.  However, regrettably we do not know how to beat the bound $ n-\pd A$  for $\pd A$ close to zero. Explicit examples when $n=4$ and $n=10$ can be found in Figure \ref{n=4} and Figure \ref{n=10}.  

 To study the Hausdorff dimension of a Kakeya set, one often considers a maximal function and estimates its norm. We also provide appropriate maximal function versions  of our dimension results; see Theorem \ref{n-sthm} and Theorem \ref{mainresult}.   Our dimension results are obtained as applications of these maximal function estimates. Here we briefly recall the classical setting.
 
\begin{defn}
	Given any $f \in L^1_{loc}(\RR^n)$, the \textit{Kakeya maximal function of $f$} is a function $(f)_{\delta}^*: S^{n-1} \rightarrow \RR$ defined by 
	\mm{(f)_{\delta}^*(e)=\underset{a \in \RR^n}{\sup} \frac{1}{\abso{\tube{e}{a}}}\intt{\tube{e}{a}}{}\abso{f(x)}dx,}
	where the $\delta$-tube $\tube{e}{a_e}$ is the $\delta$-neighbourhood of the unit line segment $I_{e}(a_e)$.
	
	When the two vectors $e_1, e_2$ satisfy $\abso{e_1-e_2} > \delta$, we say $e_1$ and $e_2$ are $\delta$-separated, and the tubes $\tube{e_1}{a_1}, \tube{e_2}{a_2}$ are called $\delta$-separated tubes.
\end{defn}

The following conjecture, known as the Kakeya maximal function conjecture, implies the Kakeya conjecture.
\begin{conj}[Kakeya maximal function conjecture] \label{kmconjecture}
	For all $\varepsilon >0$, there exists a constant $C_{n,\varepsilon}$ only depending on $n$ and $\varepsilon$ such that for all $f \in L^1_{loc}(\RR^n)$ and $0 < \delta < 1$,
	\begin{equation}
		\nnn{(f)_{\delta}^*}{L^n(S^{n-1})}{} \leq C_{n,\varepsilon} \delta^{-\epsilon}\nnn{f}{L^n(\RR^n)}{}. \nonumber
	\end{equation} 
\end{conj}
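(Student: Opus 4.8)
This is the central open problem of the subject, so an honest plan must be built around the known reformulations and partial results, with an explicit statement of where the argument currently stalls. The first step would be to pass from the maximal function to its dual geometric form. Linearising the supremum (picking, for each direction $e$, a near-optimal centre $a_e$) and dualising the conjectured $L^n(\RR^n) \to L^n(S^{n-1})$ bound, the inequality becomes equivalent to a non-concentration statement for a maximal family of $\delta$-separated tubes: writing $\{\tube{e_j}{a_j}\}$ for $\sim \delta^{-(n-1)}$ tubes in $\delta$-separated directions, one must show $\big| \bigcup_j \tube{e_j}{a_j}\big| \gtrsim_\epsilon \delta^\epsilon$, equivalently that the multiplicity function $\sum_j \mathbf{1}_{\tube{e_j}{a_j}}$ stays controlled in the $L^{n/(n-1)}$ norm dual to the $L^n$ estimate. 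The elementary input is that two tubes in directions differing by angle $\theta \geq \delta$ overlap in measure $\lesssim \delta^n/\theta$; feeding this into a C\'ordoba-type $L^2$ argument disposes of $n = 2$ completely and provides the baseline exponent that the higher-dimensional methods must improve upon.

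For $n \geq 3$ the plan is to control the overlap by a geometric induction on scales. I would first run the \emph{bush argument} used in this paper: if many tubes share a common point, $\delta$-separation of directions forces them to fan out into nearly disjoint pieces away from the bush centre, yielding dimension $(n+1)/2$. Thickening the central point to a line gives Wolff's \emph{hairbrush argument} and the improved exponent $(n+2)/2$ together with the corresponding maximal bound. Beyond this one brings in the strong modern inputs: the Bennett--Carbery--Tao multilinear Kakeya inequality combined with the Bourgain--Guth broad/narrow decomposition, Guth's polynomial partitioning (which confines the tubes to a low-degree variety and runs the induction on its geometry), and the Katz--Tao arithmetic estimates that are most effective in high dimension. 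In $\RR^3$ these ideas, driven through the recent Wang--Zahl analysis of sticky and $SL_2$-type configurations, close the gap and give $\haus K = 3$.

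The hard part --- indeed the reason the statement is still only a conjecture for every $n \geq 4$ --- is precisely the final closing of the gap between these lower bounds and the sharp exponent $n$. The multilinear and polynomial methods reduce matters to understanding how $\delta$-tubes may cluster along lower-dimensional algebraic varieties, but in dimension four and above the extremal ``plany'' and Heisenberg/$SL_2$-type configurations are not ruled out by any presently available incidence or sub-level-set estimate, and the geometric gain extracted at each step of the induction decays too quickly to reach $n$. It is exactly this missing high-dimensional input that motivates retreating to hypotheses under which the overlap \emph{can} be bounded directly --- for example confining the tube midpoints to a set of small packing dimension --- which is the strategy pursued in the remainder of the paper and which substitutes a clean dimension count for the incidence estimate that the full conjecture would require.
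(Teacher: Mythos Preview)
The statement in question is a \emph{conjecture}, and the paper does not prove it: it is explicitly recorded as open for $n \geq 3$ (with only the planar case, due to C\'ordoba, known), and the paper's own contributions concern the distinct problem of $A$-restricted Kakeya sets. Your write-up correctly identifies this --- you do not claim a proof, but instead give an accurate survey of the equivalent dual/tube formulation, the C\'ordoba $L^2$ argument for $n=2$, the bush/hairbrush/multilinear/polynomial hierarchy of partial results, and the precise obstruction (plany and Heisenberg-type configurations in $n\geq 4$) that prevents closure; this matches the paper's stance and is the appropriate response to a request to ``prove'' an open conjecture.
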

One can see the proof of Conjecture \ref{kmconjecture} implying Conjecture \ref{kconjecture} in \cite[Proposition 10.2]{wlecturenotes}. The planar case of Conjecture \ref{kmconjecture}  was proved by C\'ordoba \cite{C77}, but it remains open in $\RR^n$, $n \geq 3$.

  The main theorem of this paper, Theorem \ref{mainresult}, demonstrates the connection between estimates for the Kakeya maximal function in $\RR^{n-1}$ and our   \emph{restricted}  Kakeya maximal function  in $\RR^n$; see Definition \ref{defrestricted} for the precise definition. Given the breakthrough on the Kakeya conjecture in $\RR^3$ by Hong and Zahl \cite{WZ25}, we focus more on the case of $\RR^4$, rather than $\RR^3$, when we consider examples  in Section \ref{examples}. For general $n \geq 5$, we apply the results of Hickman, Rogers, and Zhang \cite{HRZ22} on the Kakeya maximal conjecture to obtain lower bounds for the Hausdorff dimension of restricted Kakeya set in Corollary \ref{generalcor_ub}.

\section{Main Results}

\subsection{Key definitions and main theorems}
In this paper, we use $|\cdot|$ to denote general volume measure and we often use a subscript to indicate the dimension if this is not clear from context.  More precisely, $|\cdot|_{n}$ will represent the Lebesgue measure in $\RR^n$, $|\cdot|_{n-1}$ will  represent the surface measure on the sphere $S^{n-1}$ and $|\cdot|_{1}$ also means the length in $\RR^n$. We use the notation $ A \lesssim B $ to indicate that $ A \leq C_n B $, where the constant $ C_n $ depends only on the ambient spatial dimension $ n $. We write $A \approx B $ if there exist two positive constants $c_n$ and $C_n$ depending on $n$ such that $c_n A \leq B \leq C_n A$. Similarly, we write $ A \lesssim_{\varepsilon} B $ to mean that $ A \leq C_{n,\varepsilon} B $, where the constant $ C_{n,\varepsilon} $ depends on both $ n $ and a parameter  $ \varepsilon $. For any Lebesgue measurable subset $E \subseteq \mathbb{R}^n$, $\chi_E$ denotes the characteristic function of $E$.

We first  define a   special type of Kakeya set where the  midpoints of each line segment are restricted to a given set $A$.  These sets are our main object of study.   
\begin{defn} Given a bounded set $A \subseteq \RR^n$, we say a compact set $K_A$ is  an \emph{$A$-restricted Kakeya set}  if, for all  $e \in S^{n-1}$, there exists a point $a_e \in A$ such that the unit line segment $I_e(a_e)$, in the direction $e$ with midpoint $a_e$, is contained in $K_A$. 
\end{defn}
We also consider the analogous $A$-restricted Kakeya maximal functions.  Using the ``bush argument" given by Bourgain \cite{B91}, we prove a weak-type inequality related to these  maximal functions. 
\begin{defn}\label{defrestricted}
	Given any $A \subseteq \RR^n$ and $f \in L^1_{loc}(\RR^n)$, the \textit{$A$-restricted Kakeya maximal function of $f$} is a function $\McK{f}: S^{n-1} \rightarrow \RR$ defined by 
	\mm{\McK{f}(e)=\underset{a \in A}{\sup} \frac{1}{\abso{\tube{e}{a}}}\intt{\tube{e}{a}}{}\abso{f(x)}dx.}
\end{defn}

We define this $A$-restricted Kakeya maximal function because the restricted weak-type $(p, q)$ estimates for it will yield a lower bound on the Hausdorff dimension of $K_A$. The following lemma, analogous to \cite[Proposition 10.2]{wlecturenotes}, establishes the connection between the Kakeya maximal function and the Hausdorff dimension of $K_A$, and one can check the proof in Section \ref{lemmaproof}.
\begin{lma} \label{keylemma}
  Suppose that for some $1 \leq p \leq q < \infty$ and $\beta > 0$, 
 \begin{align}
  \nnn{\McK{\chi_E}}{L^{q,\infty}(S^{n-1})}{} \lesssim_{\varepsilon} \delta^{-\beta-\varepsilon} \nnn{\chi_E}{L^{p}(\mathbb{R}^n)}{},  \label{ineq}
\end{align} 
for all measurable sets $E \subseteq \RR^n$, $0<\delta <1$ and $\varepsilon >0$. Then the Hausdorff dimension of every $A$-restricted Kakeya set   is at least $n - \beta p$. 
\end{lma}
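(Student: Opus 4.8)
The plan is to follow the standard maximal-function-to-dimension argument (as in Wolff's lecture notes, Proposition 10.2), adapted to the restricted setting. Suppose $K_A$ is an $A$-restricted Kakeya set with $\haus K_A = d$, and fix any $\alpha > d$; I want to show $\alpha \geq n - \beta p$, which gives the claim. Since $\haus K_A < \alpha$, for every $\varepsilon > 0$ there is a cover $K_A \subseteq \bigcup_i U_i$ with $\sum_i (\mathrm{diam}\, U_i)^\alpha < \varepsilon$. Group the $U_i$ dyadically according to their diameter: let $\mathcal{C}_j$ be the collection of those $U_i$ with $\mathrm{diam}\, U_i \approx 2^{-j}$, and let $N_j = \#\mathcal{C}_j$, so that $\sum_j N_j 2^{-j\alpha} \lesssim \varepsilon$. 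The idea is to feed each scale $\delta = 2^{-j}$ into the hypothesis \eqref{ineq} separately.

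Fix a scale $\delta = 2^{-j}$ and let $E = E_\delta$ be the $\delta$-neighbourhood of the union of the sets in $\mathcal{C}_j$ (or, more precisely, of the portion of $K_A$ covered at roughly this scale); then $|E|_n \lesssim N_j \delta^n$, so $\norm{\chi_E}_{L^p(\RR^n)} \lesssim (N_j \delta^n)^{1/p}$. On the other hand, for \emph{every} direction $e \in S^{n-1}$ the unit segment $I_e(a_e)$ with $a_e \in A$ lies in $K_A$, and a fixed positive proportion of its length must be covered by sets of $\mathcal{C}_j$ for at least one choice of $j$ — here one uses a pigeonhole over the (logarithmically many relevant) scales $j$ to pass to a single scale $\delta$ and a subset $\Omega \subseteq S^{n-1}$ of directions with $|\Omega|_{n-1} \gtrsim 1/\log(1/\eps)$ whose segments have a definite fraction of their length inside the $\delta$-neighbourhood of $\bigcup \mathcal{C}_j$. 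For such $e$, the $\delta$-tube $\tube{e}{a_e}$ satisfies $\frac{1}{|\tube{e}{a_e}|}\int_{\tube{e}{a_e}} \chi_E \gtrsim 1$, so $\McK{\chi_E}(e) \gtrsim 1$ on $\Omega$, whence $\norm{\McK{\chi_E}}_{L^{q,\infty}(S^{n-1})} \gtrsim |\Omega|_{n-1}^{1/q} \gtrsim (\log(1/\eps))^{-1/q}$. Plugging these two bounds into \eqref{ineq} yields
\[
(\log(1/\eps))^{-1/q} \lesssim_\eps \delta^{-\beta - \eps} (N_j \delta^n)^{1/p},
\]
i.e. $N_j \gtrsim_\eps \delta^{\beta p + \eps p - n} (\log(1/\eps))^{-p/q}$ (up to constants absorbed into the $\eps$-dependence).

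Finally, combine this lower bound for $N_j$ with the summability $\sum_j N_j 2^{-j\alpha} \lesssim \varepsilon$: the single scale $j$ we selected contributes $N_j \delta^\alpha \gtrsim_\eps \delta^{\alpha - n + \beta p + \eps p}(\log(1/\eps))^{-p/q}$ to a sum that is at most $\varepsilon$. Choosing $\eps$ appropriately (so that the $\delta$-power is non-negative, which forces a comparison of exponents), letting $\varepsilon \to 0$ and tracking how $\delta = \delta(\varepsilon) \to 0$, one concludes that we cannot have $\alpha - n + \beta p < 0$; that is, $\alpha \geq n - \beta p$. Since $\alpha > \haus K_A$ was arbitrary, $\haus K_A \geq n - \beta p$.

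The main obstacle is the bookkeeping in the pigeonhole step: the cover has sets at infinitely many scales, while \eqref{ineq} is a single-scale estimate, so one must localise to one dyadic scale while retaining a non-negligible set of directions and a definite fraction of each relevant segment's length — and one must ensure the logarithmic losses from pigeonholing are harmless because the estimate \eqref{ineq} has an $\eps$-room $\delta^{-\eps}$ built in. A secondary technical point is handling segments whose length is spread across many scales (so that no single scale captures a fixed fraction); this is dealt with by first passing to a subset of $K_A$ of comparable Hausdorff content at a fixed scale, or equivalently by a Besicovitch-type covering/Vitali argument before the pigeonhole. Everything else is the routine conversion between covers, measures of neighbourhoods, and $L^{q,\infty}$ norms.
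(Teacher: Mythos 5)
Your overall skeleton---dyadic decomposition of a cover, feeding each scale $\delta=2^{-j}$ into the weak-type estimate \eqref{ineq}, and comparing the resulting lower bound on $N_j$ with $\sum_j N_j 2^{-j\alpha}<\varepsilon$---matches the paper's argument. But there is a genuine gap in the step where you ``pigeonhole over the (logarithmically many relevant) scales $j$ to pass to a single scale.'' The condition $\sum_i (\mathrm{diam}\,U_i)^\alpha<\varepsilon$ forces the occupied scales to satisfy $j\gtrsim \log(1/\varepsilon)$, i.e.\ it bounds the scales from \emph{below}, not above: a cover can (and in general will) involve infinitely many dyadic scales, and a single unit segment can have its length distributed over all of them so that no single scale captures a fixed positive fraction. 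Consequently you cannot pigeonhole to one scale while retaining both a set of directions of measure $\gtrsim 1/\log(1/\varepsilon)$ and a definite covered fraction of each segment; the ``secondary technical point'' you defer to a Besicovitch/Vitali argument or a passage to ``a subset of comparable Hausdorff content at a fixed scale'' is in fact the crux, and the proposed remedies are not made to work.

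The paper resolves this without any single-scale reduction, via a convergent-series device: since $\sum_k \frac{1}{100k^2}<1$ while the cover accounts for the full unit length of each segment, for every direction $e$ there is \emph{some} $k$ with $|I_e(a_e)\cap\bigcup_{j\in J_k}B_j|_1\geq \frac{1}{100k^2}$. This defines sets $S_k$ with $\bigcup_k S_k=S^{n-1}$, and one applies \eqref{ineq} at scale $2^{-k}$ with the $k$-dependent threshold $\lambda\approx k^{-2}$ (testing against $\chi_{F_k}$ where $F_k=\bigcup_{j\in J_k}10B_j$). The polynomial loss $k^{2q}$ is absorbed into $2^{k\varepsilon}$, and summing the resulting bounds $\#J_k\,2^{-k(n-\beta p-C\varepsilon)}\gtrsim |S_k|_{n-1}^{p/q}\gtrsim |S_k|_{n-1}$ over \emph{all} $k$ gives $\sum_j r_j^{\,n-\beta p-C\varepsilon}\gtrsim 1$ directly. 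You should replace your single-scale pigeonhole with this mechanism (or an equivalent summation over all scales); the rest of your computation then goes through essentially as written.
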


We now state our first main theorem.  It establishes a  weak type estimate for the $A$-restricted Kakeya maximal function.   The proof is  presented in Section \ref{babybushproof}.

\begin{thm} \label{n-sthm}
For $A \subseteq \RR^n$ with $\ubox A \leq s$,  
\begin{align} 
\nnn{\McK{\chi_E}}{L^{n,\infty}(S^{n-1})}{} \lesssim_{\varepsilon} \delta^{-\frac{s}{n}-\varepsilon} \nnn{\chi_E}{L^n(\mathbb{R}^n)}{} \label{n-sformula}
\end{align} 
holds for all Lebesgue measurable sets $E \subseteq \RR^n$ and all positive $\delta $. 
\end{thm}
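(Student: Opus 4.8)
The plan is to reduce \eqref{n-sformula} to a fixed--centre (``bush'') maximal inequality by covering $A$ at scale $\delta$, and then to prove that bush inequality by Bourgain's bush argument \cite{B91}. First, since $\ubox A\le s$, for each $\varepsilon>0$ there are points $a_1,\dots,a_N$ with $N\lesssim_{\varepsilon}\delta^{-s-\varepsilon}$ and $A\subseteq\bigcup_{j=1}^N B(a_j,\delta)$. If $a\in A$ lies within $\delta$ of some $a_j$ then $\tube{e}{a}\subseteq T^{3\delta}_e(a_j)$, and since $|\tube{e}{a}|\approx|T^{3\delta}_e(a_j)|\approx\delta^{n-1}$ we obtain the pointwise bound $\McK{\chi_E}(e)\lesssim\max_{1\le j\le N}\mathcal{B}_j(e)$, where $\mathcal{B}_j(e):=\delta^{-(n-1)}\,|E\cap T^{3\delta}_e(a_j)|$ is the maximal function attached to the bush of tubes through the common point $a_j$. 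From $\{e:\max_j\mathcal{B}_j(e)>\lambda\}=\bigcup_j\{e:\mathcal{B}_j(e)>\lambda\}$ one gets the weak--type quasi--subadditivity $\|\McK{\chi_E}\|_{L^{n,\infty}(S^{n-1})}^n\lesssim\sum_{j=1}^N\|\mathcal{B}_j\|_{L^{n,\infty}(S^{n-1})}^n$. So it suffices to prove the uniform \emph{bush estimate} $\|\mathcal{B}_j\|_{L^{n,\infty}(S^{n-1})}\lesssim\log(2/\delta)\cdot|E|^{1/n}$; substituting this back and relabelling $\varepsilon$ yields \eqref{n-sformula}.

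To set up the bush estimate, fix a centre $p$, a level $\lambda$, and put $\Omega_\lambda=\{e:|E\cap T^{3\delta}_e(p)|>c\lambda\delta^{n-1}\}$ for a small absolute constant $c$; the goal is $\lambda^n|\Omega_\lambda|\lesssim\log(2/\delta)^n|E|$. Split $T^{3\delta}_e(p)$ into the central ball $B(p,C\delta)$, of volume $\lesssim\delta^n$ (negligible once $\lambda$ exceeds an absolute multiple of $\delta$; the complementary small-$\lambda$ range, and the range where $\Omega_\lambda$ is empty, are disposed of by the trivial pointwise bounds $\mathcal{B}_j\le\min\{1,\,c|E|\delta^{-(n-1)}\}$), and into the dyadic annular portions $P_{e,m}:=T^{3\delta}_e(p)\cap\{2^{-m-1}<|x-p|\le 2^{-m}\}$ for $1\le m\le M:=\lceil\log_2(1/\delta)\rceil$. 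Pigeonholing over $m$, every $e\in\Omega_\lambda$ satisfies $|E\cap P_{e,m(e)}|\gtrsim\lambda\delta^{n-1}/M$ for some $m(e)$, giving a partition $\Omega_\lambda=\bigcup_m\Omega_{\lambda,m}$.

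Now fix $m$ and pass to a maximal $(K2^m\delta)$--separated subset of $\Omega_{\lambda,m}$, with $K$ a large absolute constant. Two lines through $p$ whose directions are $K2^m\delta$--separated are $\gtrsim K\delta$--apart at distance $\approx 2^{-m}$ from $p$, so for $K$ large the portions $P_{e,m}$ over this subset are pairwise disjoint; hence their number $L_m$ satisfies $L_m\cdot\lambda\delta^{n-1}/M\lesssim|E\cap\{2^{-m-1}<|x-p|\le 2^{-m}\}|$, while maximality gives $|\Omega_{\lambda,m}|\lesssim L_m(2^m\delta)^{n-1}$. The decisive point is that $|E\cap P_{e,m}|\le|P_{e,m}|\approx 2^{-m}\delta^{n-1}$, so $\Omega_{\lambda,m}=\emptyset$ unless $2^{-m}\gtrsim\lambda/M$, i.e. only $O(\log(M/\lambda))$ annuli contribute. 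Combining the two bounds and summing over these $m$,
\[
|\Omega_\lambda|\;\lesssim\;\frac{M}{\lambda}\sum_{m:\,2^{-m}\gtrsim\lambda/M}\bigl|E\cap\{2^{-m-1}<|x-p|\le 2^{-m}\}\bigr|\,2^{m(n-1)}\;\lesssim\;\frac{M}{\lambda}\,|E|\,\Bigl(\frac{M}{\lambda}\Bigr)^{n-1}\;=\;\frac{M^n}{\lambda^n}\,|E|,
\]
where the middle step uses $2^{m(n-1)}\le(M/\lambda)^{n-1}$ on the relevant range together with the fact that the annuli are disjoint, so the masses sum to at most $|E|$. Since $M\approx\log(1/\delta)$, this is the bush estimate.

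I expect the one genuinely delicate step to be the bush estimate, and within it the book-keeping in the last display. A naive annular decomposition that ignores the shrinking $|P_{e,m}|\approx 2^{-m}\delta^{n-1}$ only recovers the trivial bound $\delta^{-(n-1)}$: the innermost annuli, where the tubes of a bush bunch up and overlap heavily, would otherwise dominate the sum; it is precisely the length-$2^{-m}$ of the annular tube segment that rules those annuli out for $\lambda$ not too small. The remaining ingredients — the choice of separation constant $K$ forcing disjointness, and the boundary ranges of $\lambda$ — are routine once this mechanism is in place.
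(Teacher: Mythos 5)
Your argument is correct, and it reaches the estimate by a recognisably different organisation of the same underlying ``bush'' mechanism. The paper also covers $A$ by $\lesssim\delta^{-s-\varepsilon}$ balls of radius $\approx\delta$, but it then pigeonholes at the level of \emph{sets}: starting from a maximal $\delta$-separated subset of the superlevel set it selects the single heaviest ball of the cover, obtains one bush of $\gtrsim N/N_\delta$ tubes through a common point $y$, excises one ball $B(y,c\lambda)$, and passes to a $\frac{b\delta}{c\lambda}$-separated subfamily of directions so that (via the diameter bound of Proposition 3.1) the truncated tubes are pairwise disjoint, each still carrying $\gtrsim\lambda\delta^{n-1}$ of $E$-mass; this yields $|E|\gtrsim N\lambda^n\delta^{n-1+s+\varepsilon}$ with no logarithmic loss. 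You instead decompose at the level of \emph{operators} — dominating $\McK{\chi_E}$ pointwise by the maximum of $N_\delta$ fixed-centre bush maximal functions and summing their $L^{n,\infty}$ quasinorms to the $n$-th power — and you prove the uniform bush bound by dyadic annuli around the centre with a $2^m\delta$-separation at each scale, paying a $\log(1/\delta)$ from the pigeonhole over scales that is then absorbed into $\delta^{-\varepsilon}$. The two routes lose the factor $\delta^{-s-\varepsilon}$ in the same place and give the same power of $\lambda$; the paper's single-excised-ball version is sharper by a logarithm and closer to Bourgain's original, while yours isolates a clean, reusable $L^n\to L^{n,\infty}$ statement for the fixed-centre operator. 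Your key disjointness step is justified by exactly the paper's Proposition 3.1 (the intersection of two tubes through $p$ with angle $\geq K2^m\delta$ has diameter $\lesssim 2^{-m}/K$ and contains $p$, so it misses the annulus), provided you measure separation by the acute angle between the \emph{lines} rather than $|e-e'|$ on $S^{n-1}$ (nearly antipodal directions give the same tube); this costs only a factor of $2$ and is the same gloss the paper itself makes. Your handling of the boundary ranges (central ball, small $\lambda$, innermost annuli) is adequate as sketched.
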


As an immediate corollary of the previous theorem, we obtain an alternative proof of the simple estimate from Proposition A (2). Even though this is stated in terms of the box dimension of $A$, it is straightforward to upgrade this to packing dimension; see Corollary \ref{packing}.
\begin{cor} \label{n-s}
Let $A \subseteq \RR^n$ with $\ubox A \leq s$ and $K_A$ be  an $A$-restricted Kakeya set.  Then $$\haus K_A \geq n - s.$$
\end{cor}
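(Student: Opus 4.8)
The plan is to combine Theorem~\ref{n-sthm} with Lemma~\ref{keylemma}; the corollary is essentially immediate once the trivial ranges of the parameter $s$ are disposed of. First I would note that if $s \geq n$ then $n - s \leq 0$, and the bound $\haus K_A \geq n - s$ holds automatically because Hausdorff dimension is nonnegative; so I may assume $s < n$ from now on.

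Next, for $0 < s < n$, I would simply apply Theorem~\ref{n-sthm}, which gives precisely
\[
\nnn{\McK{\chi_E}}{L^{n,\infty}(S^{n-1})}{} \lesssim_{\varepsilon} \delta^{-\frac{s}{n}-\varepsilon} \nnn{\chi_E}{L^n(\mathbb{R}^n)}{}
\]
for all measurable $E \subseteq \RR^n$, all $0 < \delta < 1$, and all $\varepsilon > 0$. This is exactly hypothesis~\eqref{ineq} of Lemma~\ref{keylemma} with the choice $p = q = n$ and $\beta = s/n$. Since $1 \leq n < \infty$ the requirement $1 \leq p \leq q < \infty$ holds trivially, and $\beta = s/n > 0$, so all hypotheses of the lemma are met. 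Lemma~\ref{keylemma} then yields $\haus K_A \geq n - \beta p = n - \tfrac{s}{n}\cdot n = n - s$.

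It remains to handle the degenerate case $s = 0$, where Lemma~\ref{keylemma} does not apply directly since it requires $\beta > 0$. Here I would argue by a limiting procedure: if $\ubox A \leq 0$ then $\ubox A \leq s'$ for every $s' \in (0, n)$, so the previous paragraph gives $\haus K_A \geq n - s'$ for every such $s'$, and letting $s' \to 0^+$ yields $\haus K_A \geq n \geq n - s$.

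Since all the substance is already contained in Theorem~\ref{n-sthm} and Lemma~\ref{keylemma}, I do not expect any genuine obstacle in this deduction; the only points needing care are the bookkeeping that $p = q = n$ meets the constraints of the lemma and the passage to the limit in the case $s = 0$.
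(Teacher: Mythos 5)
Your proposal is correct and is essentially identical to the paper's own one-line proof, which also applies Lemma~\ref{keylemma} with $p = q = n$ and $\beta = s/n$ using the estimate from Theorem~\ref{n-sthm}. The extra care you take with the edge cases $s \geq n$ and $s = 0$ is reasonable bookkeeping that the paper omits, but it does not change the substance of the argument.
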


\begin{proof} 
The result follows by  Lemma \ref{keylemma} taking $p = q = n$ and $\beta = \frac{s}{n}$ and using the estimate from Theorem \ref{n-sthm}. 
\end{proof}

Our  next result is proved by  an application of Bourgain's bush argument \cite{B91}. Given an estimate  for the Kakeya maximal function in dimension $n-1$,   we derive an estimate for the $A$-restricted Kakeya maximal function in $\RR^n$. Note that the assumption \eqref{result_n-1} is necessary in the proof of Lemma \ref{bourgainlemma}; see the proof of \cite[Lemma 1.52]{B91} for further details.  This is where we need information  about the Kakeya maximal function in dimension $n-1$.  Essentially it comes from slicing $\mathbb{R}^n$ by hyperplanes and using unrestricted  estimates on the hyperplanes. 

\begin{thm} \label{mainresult}
	Suppose for some $h_{n-1} >0$ and $p_{n-1} >1$, 
	\begin{equation}
		\nnn{(f)^*_{\delta}}{L^{p_{n-1}}(S^{n-2})}{} \lesssim_{\varepsilon} C \delta^{-h_{n-1}-\varepsilon}\nnn{f}{L^{p_{n-1}}(\mathbb{R}^{n-1})}{}\label{result_n-1} 
	\end{equation}
	holds for all $f \in L^1_{loc}(\RR^{n-1})$ and all $\varepsilon >0$. Then for $A \subseteq \RR^n$ with $\ubox A \leq s$ and all $\varepsilon>0$,
	\begin{equation}
		\nnn{\McK{\chi_E}}{L^{p,\infty}(S^{n-1})}{} \lesssim_{\varepsilon} \delta^{-\beta-\varepsilon} \nnn{\chi_E}{L^p(\mathbb{R}^n)}{}, \label{mainresultformula}
	\end{equation}
	where \begin{equation}
		p=\frac{p_{n-1}+n(p_{n-1}-1)+1}{p_{n-1}}, \quad \beta=\frac{h_{n-1}p_{n-1}+sp_{n-1}-s}{p_{n-1}+n(p_{n-1}-1)+1}.
	\end{equation}
\end{thm}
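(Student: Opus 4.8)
The plan is to prove the weak type inequality \eqref{mainresultformula} by passing to its dual, discretised form and then running Bourgain's bush argument, the genuinely new input being that the midpoints of the tubes lie in the low‑dimensional set $A$; this is exploited through a covering argument. First I would reduce \eqref{mainresultformula} to a tube‑counting statement. By a routine discretisation --- approximating $S^{n-1}$ by a maximal $\delta$‑separated net and each near‑extremising translate by an honest $\delta$‑tube whose midpoint lies in $A$ --- it suffices to show that for every $\lambda\in(0,1]$, every measurable $E\subseteq\RR^n$, and every family $\{T_j=\tube{e_j}{a_j}\}_{j=1}^{m}$ of $\delta$‑separated tubes with $a_j\in A$ and $\abso{E\cap T_j}\ge\lambda\abso{T_j}$ for all $j$, one has
\[ m\,\lambda^{p}\,\delta^{n-1}\ \lesssim_{\varepsilon}\ \delta^{-\beta p-\varepsilon}\,\abso{E}, \]
since $m\delta^{n-1}$ is comparable to the surface measure of the superlevel set $\{e\in S^{n-1}:\McK{\chi_E}(e)>\lambda\}$; this is exactly the form of estimate consumed by Lemma~\ref{keylemma}.

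Next I would bring in the hypothesis $\ubox A\le s$. For a scale $\rho\in[\delta,1]$, to be optimised only at the very end, cover $A$ by $\lesssim_{\varepsilon}\rho^{-s-\varepsilon}$ balls of radius $\rho$ and sort the tubes according to which ball contains their midpoint. Within any one class every midpoint lies within $\rho$ of a common centre $x_0$, so the associated tubes form a ``$\rho$‑thick bush'' rooted at $x_0$; summing over the classes, it is enough to bound the number $N$ of tubes in a single such bush in terms of $\delta,\rho,\lambda$ and $\abso{E}$. The heart of the matter is then the bush estimate, namely the analogue of \cite[Lemma 1.52]{B91}, which in our setting is Lemma~\ref{bourgainlemma}. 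One dyadically pigeonholes the bush tubes over their distance to $x_0$ so that a fixed proportion of them place a $\gtrsim\lambda/\log(1/\delta)$‑fraction of their $E$‑mass in one common dyadic annulus $\{\abso{x-x_0}\sim\rho'\}$, necessarily with $\rho'\gtrsim\lambda$. After rescaling by $1/\rho'$ about $x_0$ and projecting onto suitable hyperplanes, these tube‑pieces become families of $\delta$‑separated $\delta$‑tubes in $\RR^{n-1}$, and this is precisely where the unrestricted Kakeya maximal estimate \eqref{result_n-1} enters: applied on each hyperplane, together with Fubini in the remaining variable and the elementary bound on the overlap multiplicity of a bush away from its root, it limits how many such $(n-1)$‑dimensional tubes can each meet $E$ in a fixed proportion. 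The few directions nearly parallel to the slicing hyperplanes must be treated separately, by the trivial count. The outcome is a bound of the form $N\lesssim_{\varepsilon}\delta^{-\varepsilon}\lambda^{-p'}\rho^{-\gamma}\abso{E}$ with $p'$ and $\gamma$ explicit in $n,p_{n-1},h_{n-1}$.

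Finally, substituting this into the sum over the $\lesssim_{\varepsilon}\rho^{-s-\varepsilon}$ classes and optimising over $\rho\in[\delta,1]$ --- balancing the factor $\rho^{-s}$ coming from the number of classes against the factor $\rho^{-\gamma}$ coming from the bush estimate --- should reproduce $p=\frac{p_{n-1}+n(p_{n-1}-1)+1}{p_{n-1}}$ and, after simplification, $\beta p=h_{n-1}+s\bigl(1-\tfrac{1}{p_{n-1}}\bigr)$, which is the asserted value of $\beta$. (As a consistency check, when $s=0$ this yields $\beta p=h_{n-1}$, and feeding the Kakeya maximal conjecture in $\RR^{n-1}$, i.e.\ $p_{n-1}=n-1$ and $h_{n-1}=0$, into Lemma~\ref{keylemma} gives $\haus K_A\ge n$, as expected.)

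I expect the main obstacle to be the bush estimate itself: organising the hyperplane slicing so that the $n$‑dimensional bush genuinely collapses to an $(n-1)$‑dimensional Kakeya configuration to which \eqref{result_n-1} can be applied, dealing with the nearly‑parallel directions without loss, and --- the most delicate point --- tracking the rescaling by $\rho'$ and the several pigeonholing steps carefully enough that the input pair $(p_{n-1},h_{n-1})$ transforms into exactly the stated output exponents. A secondary but real bookkeeping nuisance is arranging that the box‑counting covering of $A$, each dyadic pigeonholing, and the $\varepsilon$ already present in \eqref{result_n-1} all collapse into a single harmless factor $\delta^{-\varepsilon}$, which forces the number of pigeonholing steps to be bounded independently of $\delta$.
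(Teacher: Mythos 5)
Your plan has the right ingredients in view (discretisation, covering $A$ at scale $\delta$, a bush construction, Bourgain's Lemma 1.52 as the place where the $(n-1)$-dimensional estimate \eqref{result_n-1} enters), but the architecture you propose --- sort the tubes into $\lesssim_\varepsilon \rho^{-s-\varepsilon}$ classes according to which $\rho$-ball of $A$ contains their midpoint, bound the number $N$ of tubes in a \emph{single} class/bush via hyperplane slicing and \eqref{result_n-1}, then sum over classes and optimise over $\rho$ --- is not the argument, and I do not believe it can be made to work. The count of tubes in a single bush is governed by elementary disjointness (the tubes of a bush are pairwise essentially disjoint outside $B(x_0,\lambda)$ by Proposition \ref{propgeo}), which gives $|E|\gtrsim N\lambda^{n}\delta^{n-1}$ per class; summed over the $\delta^{-s}$ classes this reproduces exactly Theorem \ref{n-sthm} and the bound $n-s$, and no amount of slicing improves the single-bush count, because the $(n-1)$-dimensional Kakeya estimate is not about how many tubes through a common point can each carry $\lambda$-mass of $E$. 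Its actual role (Lemma \ref{bourgainlemma}) is to control how much an \emph{external} tube $\tube{\xi}{a_\xi}$ can intersect a fixed bush $\BB_i$, via the maximal function $\cM_\delta$ over $\delta$-neighbourhoods of parallelograms $P_\xi(x_i,a_\xi)$ and a dyadic annular decomposition $\BB_i^k$ of the bush about its root. Your proposal never produces a situation in which such an interaction estimate is needed, so the mechanism by which $(p_{n-1},h_{n-1})$ is supposed to transform into $(p,\beta)$ is absent.

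The missing idea is Bourgain's iteration/dichotomy. One fixes the covering scale at $\rho=\delta$ (no optimisation), and \emph{greedily extracts} bushes: from $E_0=E$ pick a $\tfrac{10\delta}{\lambda}$-separated set of directions in $D_0=\{e:\McK{\chi_{E_0}}(e)>\lambda\}$, pigeonhole their midpoints into the $\lesssim\delta^{-s-\varepsilon'}$ covering balls of $A$ to find one ball containing $\gtrsim\epsilon_0\lambda^{n-1}\delta^{s+\varepsilon'-(n-1)}$ midpoints, form the bush $\BB_0$, set $E_1=E_0\setminus\BB_0$, and repeat while $|D_i|_{n-1}\geq\tfrac14\epsilon_0$. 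Disjointness of the sets $E_i\cap\BB_i$ forces the process to stop after $m\lesssim \epsilon_0^{-1}|E|_n\delta^{-s-\varepsilon'}\lambda^{-n}$ steps (Proposition \ref{stoplemma}); this is where the hypothesis on $A$ is spent. At termination, a set $\overline{D}$ of directions of measure $\geq\tfrac14\epsilon_0$ has tubes drawing at least $\tfrac\lambda2$ of their mass from the \emph{union of the finitely many extracted bushes}, and it is to $\sum_i\cM_\delta(\chi_{\BB_i^k})$ on $\overline{D}$ that Lemma \ref{bourgainlemma} is applied; combining the resulting $L^{p_{n-1}}$ bound with the bound on $m$ and on $\sum_i|\BB_i|_n$ yields the exponents $p$ and $\beta$. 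Without this two-sided structure (many disjoint bushes force $|E|$ large; few bushes force strong tube--bush interactions controlled by \eqref{result_n-1}), the stated $\beta$ is out of reach.
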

Theorem \ref{mainresult} is proved in Section \ref{bushproof}. By Lemma \ref{keylemma} and Theorem \ref{mainresult}, we obtain the following corollary concerning  the Hausdorff dimension of $K_A$.
\begin{cor} \label{n-gs}
	Suppose \eqref{result_n-1} holds in $\RR^{n-1}$ for some  $h_{n-1} >0$ and $p_{n-1} >1$. Let $$g_n(s)=\frac{h_{n-1}p_{n-1}+sp_{n-1}-s}{p_{n-1}}.$$ If $A \subseteq \RR^n$ with $\ubox A \leq s$ and $K_A$ is an $A$-restricted Kakeya set in $\RR^n$, then 
	\begin{equation}
		\haus K_A \geq n-g_n(s). \nonumber
	\end{equation}
\end{cor}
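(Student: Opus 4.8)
The plan is to combine Theorem~\ref{mainresult} with Lemma~\ref{keylemma}; the corollary is an immediate consequence and there is essentially no work beyond bookkeeping. First I would invoke Theorem~\ref{mainresult}: since \eqref{result_n-1} is assumed in $\RR^{n-1}$ for the given $h_{n-1}>0$ and $p_{n-1}>1$, and since $A\subseteq\RR^n$ satisfies $\ubox A\leq s$, we obtain the restricted weak-type estimate \eqref{mainresultformula}, that is,
\[
\nnn{\McK{\chi_E}}{L^{p,\infty}(S^{n-1})}{}\lesssim_{\varepsilon}\delta^{-\beta-\varepsilon}\nnn{\chi_E}{L^p(\RR^n)}{}
\]
for all measurable $E\subseteq\RR^n$, all $0<\delta<1$ and all $\varepsilon>0$, where $p=\frac{p_{n-1}+n(p_{n-1}-1)+1}{p_{n-1}}$ and $\beta=\frac{h_{n-1}p_{n-1}+sp_{n-1}-s}{p_{n-1}+n(p_{n-1}-1)+1}$.

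Next I would check that the hypotheses of Lemma~\ref{keylemma} hold with the pair $(p,q)=(p,p)$. The requirement $p\leq q$ is trivial since $q=p$, and $p\geq 1$ holds (in fact $p>1$, as $p_{n-1}>1$ gives $n(p_{n-1}-1)+1>0$). It remains to verify $\beta>0$: because $h_{n-1}>0$, $p_{n-1}>1$ and $s\geq 0$, the numerator $h_{n-1}p_{n-1}+s(p_{n-1}-1)$ is strictly positive and the denominator is positive, so indeed $\beta>0$. Lemma~\ref{keylemma} then applies and shows that every $A$-restricted Kakeya set in $\RR^n$ has Hausdorff dimension at least $n-\beta p$.

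The final step is the elementary simplification
\[
\beta p=\frac{h_{n-1}p_{n-1}+sp_{n-1}-s}{p_{n-1}+n(p_{n-1}-1)+1}\cdot\frac{p_{n-1}+n(p_{n-1}-1)+1}{p_{n-1}}=\frac{h_{n-1}p_{n-1}+sp_{n-1}-s}{p_{n-1}}=g_n(s),
\]
so that $n-\beta p=n-g_n(s)$, which is the asserted bound. No genuine obstacle arises here: all the substance is in Theorem~\ref{mainresult} and Lemma~\ref{keylemma}. The only two points that warrant any attention are the sign check $\beta>0$ (without which the conclusion of Lemma~\ref{keylemma} would be vacuous) and the observation that the somewhat unwieldy denominators appearing in $p$ and in $\beta$ are mutual reciprocals, so that the product $\beta p$ collapses cleanly to $g_n(s)$.
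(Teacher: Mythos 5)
Your proposal is correct and follows exactly the paper's (one-line) derivation: apply Theorem~\ref{mainresult} to obtain the weak-type estimate with exponents $p$ and $\beta$, then feed it into Lemma~\ref{keylemma} with $q=p$ and observe that $\beta p=g_n(s)$. The additional checks that $\beta>0$ and $p>1$ are sensible bookkeeping that the paper leaves implicit.
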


In the above results we used upper box dimension to quantify the size of $A$.  However, in Corollary \ref{packing}, we extend our results by showing that the conclusions of Corollary \ref{n-s}, Corollary \ref{n-gs}, and (the later) Corollary \ref{generalcor_ub} remain valid when considering the packing dimension of $A$ instead of the upper box dimension. We  also prove that it is not necessary to restrict the \emph{mid}points of the  line segments. Instead, Corollary \ref{packing-arbitrary} allows $A$ to consist of an arbitrary point from each line segment, thus giving the approach more flexibility. 

\subsection{A-Restricted Kakeya sets  in $\RR^3$ and $\RR^4$} \label{examples}

If we rewrite \eqref{result_n-1} in $\RR^n$ as
\begin{equation}
	\nnn{(f)^*_{\delta}}{L^{q_0}(S^{n-1})}{} \lesssim_{\varepsilon} \delta^{-\frac{n}{p_0}+1-\varepsilon}\nnn{f}{L^{p_0}(\mathbb{R}^{n})}{} \label{classicalresult_1}
\end{equation}
for some $p_0 \geq 1$ and $q_0 \geq p_0$, then we can interpolate with the trivial estimate
\begin{equation}
	\nnn{(f)^*_{\delta}}{L^{\infty}(S^{n-1})}{} \lesssim \delta^{-(n-1)}\nnn{f}{L^{1}(\mathbb{R}^{n})}{} \nonumber
\end{equation}
to obtain
\begin{equation}
	\nnn{(f)^*_{\delta}}{L^{q}(S^{n-1})}{} \lesssim_{\varepsilon} \delta^{-\frac{n}{p}+1-\varepsilon}\nnn{f}{L^{p}(\mathbb{R}^{n})}{} \label{classicalresult_2}
\end{equation}
for all $1 \leq p \leq p_0$ and $$q=q_0\frac{1-\frac{1}{p_0}}{1-\frac{1}{p}} \geq q_0 \geq p_0 \geq p.$$ If Conjecture \ref{kmconjecture} holds in $\RR^n$, then \eqref{classicalresult_1} holds for $p_0=q_0=n$.\\

First, let us consider the case $n=3$.  Conjecture \ref{kmconjecture} is known to be true in $\RR^2$. Thus, \eqref{classicalresult_2} holds when $n=2$, i.e.,
\begin{equation}
	\nnn{(f)^*_{\delta}}{L^{q}(S^{1})}{} \lesssim \delta^{-\frac{2}{p}+1-\varepsilon}\nnn{f}{L^{p}(\mathbb{R}^{2})}{} \nonumber
\end{equation}
for all $1 \leq p \leq 2$. Using this result in $\mathbb{R}^2$,  Corollary \ref{n-gs}  gives the following lower bound for the Hausdorff dimension of $K_A$ in the case $n=3$:  
\begin{equation}
	\haus K_A \geq \max_{1 \leq p \leq 2} \paree{3-\frac{2-s}{p}-(s-1)}.\nonumber
\end{equation}
In particular, taking $p=2$ yields
\[
\haus K_A \geq 3-\frac{s}{2}.
\]
Of course this result is obsolete, given that   Wang and Zahl \cite{WZ25} proved that $\haus K  = 3$ for \emph{all} Kakeya sets.\\

Next, we consider the case $n=4$, where the Kakeya conjecture is still open. We first apply Wolff's result \cite{W95}  in $\RR^3$, which gives the estimate: 
\begin{equation}
	\nnn{(f)^*_{\delta}}{L^{q}(S^{2})}{} \lesssim \delta^{-\frac{3}{p}+1-\varepsilon}\nnn{f}{L^{p}(\mathbb{R}^{3})}{} \nonumber
\end{equation}
for all $1 \leq p \leq \frac{5}{2}$. Therefore, by applying Corollaries \ref{n-s} and \ref{n-gs}, we obtain the following lower bound for the Hausdorff dimension of  $A$-restricted Kakeya sets in $\RR^4$:
\begin{equation}
\begin{split}
	\haus K_A &\geq \max_{1 \leq p \leq \frac{5}{2}} \paree{4-\frac{3-s}{p}-(s-1),4-s} \\ &=
	\begin{cases}
4-s, &\quad 0 \leq s< \frac{1}{2}\\
\frac{19}{5} - \frac{3}{5}s , &\quad \frac{1}{2} \leq s< 3\\
2, &\quad 3 \leq s \leq4. \\
\end{cases} \nonumber
\end{split}
\end{equation}
The relation between the lower bound of $\haus K_A$ and $s$ is illustrated in Figure \ref{n=4}.
\begin{figure}[htbp]
		\centering
		\includegraphics[scale=0.62]{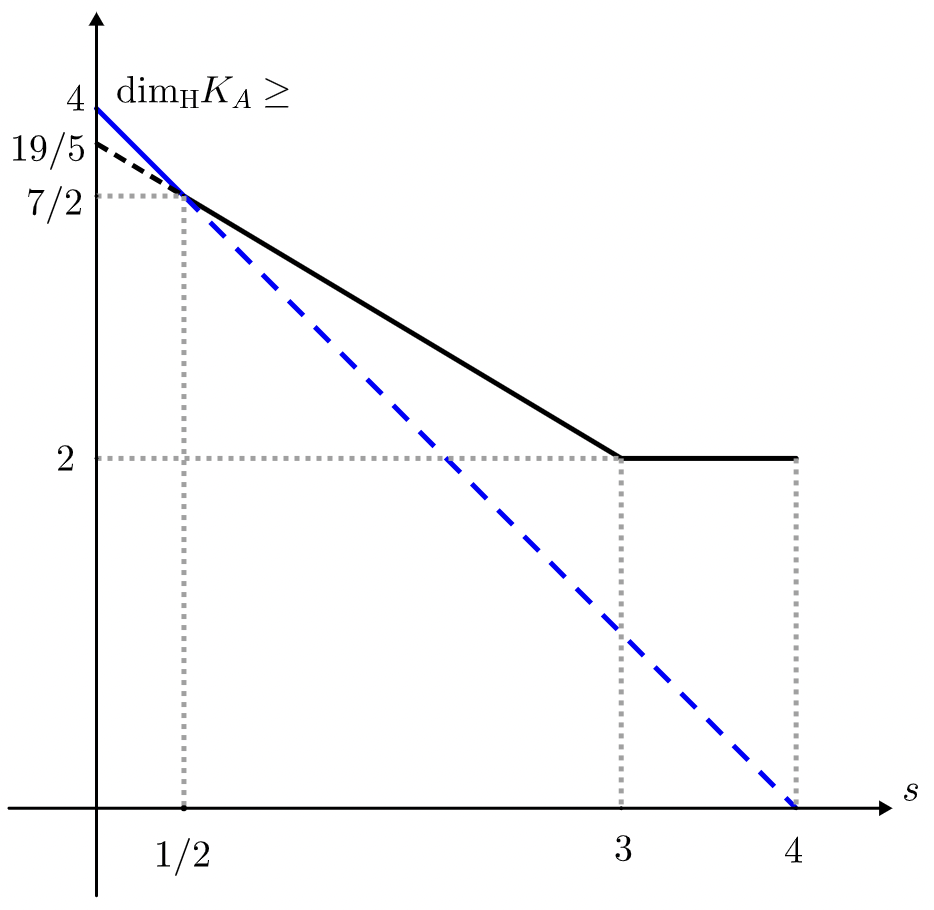} 
		\caption{Lower bound for $\haus K_A$ in $\RR^4$ under the assumption that $\ubox A \leq s$. The blue bound comes from Corollary \ref{n-s} and the black bound comes from Corollary \ref{n-gs}. The best lower bound is the maximum of these two and is shown as a solid line.}
		\label{n=4}
\end{figure}
Note that every Kakeya set in $\mathbb{R}^4$ must have  Hausdorff dimension   at least $3.059$, as shown by Katz and Zahl \cite{KZ19+}, and this is the current state-of-the-art. In particular,  $3.059<7/2$, and so both Corollaries \ref{n-s} and \ref{n-gs} are needed to obtain the best possible information; see Figure \ref{n=4}.

\subsection{Higher Dimensions}
According to \cite[Proposition 22.6]{M15}, we have the following discrete version of \eqref{result_n-1}.
\begin{prop} \label{discreteprop}
	Let $1 < p_{n-1} < \infty$, $p_{n-1}'=\frac{p_{n-1}}{p_{n-1}-1}$, $h_{n-1}>0$ and $0 < \delta < 1$. Then
	\begin{equation}
		\nnn{(f)^*_{\delta}}{L^{p_{n-1}}(S^{n-2})}{} \lesssim_{\varepsilon}  \delta^{-h_{n-1}-\varepsilon}\nnn{f}{L^{p_{n-1}}(\mathbb{R}^{n-1})}{} \nonumber
	\end{equation}
	for all $f \in L^1_{loc}(\RR^{n-1})$, $\varepsilon >0$, if and only if 
	\begin{equation}
		\bignnn{\summ{k=1}{m}\chi_{T_k}}{L^{p_{n-1}'}(\RR^{n-1})}{} \lesssim_{\varepsilon}\delta^{-h_{n-1}-\varepsilon}\pare{\summ{k=1}{m}\abso{T_k}_{n-1}}^{\frac{1}{p_{n-1}'}} \label{discreteversion}
	\end{equation}
	for all $\delta$-separated $\delta$-tubes $T_1, \dots , T_m$ and for all $\varepsilon>0$.
\end{prop}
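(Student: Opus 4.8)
\emph{Proof proposal.} This equivalence is \cite[Proposition 22.6]{M15}; the plan is to recall the standard duality argument, paying attention to the gain factor $\bigl(\sum_k|T_k|_{n-1}\bigr)^{1/p_{n-1}'}$ on the right of \eqref{discreteversion}. Write $p=p_{n-1}$, $p'=p_{n-1}'$, $h=h_{n-1}$ and work in $\RR^{n-1}$, using that a $\delta$-tube of unit length has volume $\approx\delta^{n-2}$, a $c\delta$-cap on $S^{n-2}$ has surface measure $\approx\delta^{n-2}$, and that a $\delta$-separated family of directions yields essentially disjoint $c\delta$-caps around those directions. The one elementary geometric input is that if $|e-e'|\le c\delta$ then $T^\delta_e(a)\subseteq T^{C\delta}_{e'}(a)$ for an absolute constant $C$, so that $|T^\delta_e(a)|_{n-1}^{-1}\int_{T^\delta_e(a)}|g|\lesssim (g)^*_{C\delta}(e')$; replacing $\delta$ by $C\delta$ in the maximal estimate \eqref{result_n-1} costs only a harmless factor $\approx_\varepsilon 1$.

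For the implication ``maximal estimate $\Rightarrow$ \eqref{discreteversion}'': fix $\delta$-separated $\delta$-tubes $T_1,\dots,T_m$ with directions $e_1,\dots,e_m$, and set $F=\sum_{k=1}^m\chi_{T_k}$ and $\Omega=\bigcup_k\bigl(B(e_k,c\delta)\cap S^{n-2}\bigr)$, so that $|\Omega|_{n-1}\approx\sum_k|T_k|_{n-1}$. Since $\frac1{p'}+\frac1p=1$ gives $(p'-1)p=p'$, we may write $\|F\|_{L^{p'}}^{p'}=\sum_k\int_{T_k}F^{p'-1}$; estimating $|T_k|_{n-1}^{-1}\int_{T_k}F^{p'-1}$ by the average of $(F^{p'-1})^*_{C\delta}$ over the cap $B(e_k,c\delta)$ and using $|T_k|_{n-1}\approx|B(e_k,c\delta)\cap S^{n-2}|$ together with disjointness of the caps yields
\[
\|F\|_{L^{p'}}^{p'}\ \lesssim\ \int_\Omega (F^{p'-1})^*_{C\delta}.
\]
Applying Hölder on $\Omega$, then the hypothesis \eqref{result_n-1} to $F^{p'-1}$, then the identity $\|F^{p'-1}\|_{L^p(\RR^{n-1})}=\|F\|_{L^{p'}}^{p'-1}$, we obtain
\[
\|F\|_{L^{p'}}^{p'}\ \lesssim_\varepsilon\ |\Omega|_{n-1}^{1/p'}\,\delta^{-h-\varepsilon}\,\|F\|_{L^{p'}}^{p'-1},
\]
and dividing by the finite positive quantity $\|F\|_{L^{p'}}^{p'-1}$ gives \eqref{discreteversion}.

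For the converse, assume \eqref{discreteversion}; we may take $f\ge 0$ with $\|f\|_{L^p}=1$. Covering $S^{n-2}$ by boundedly overlapping $\delta$-caps centred at a maximal $\delta$-separated set $\{e_k\}$ and using the geometric input to dominate $(f)^*_\delta$ on each cap by $(f)^*_{C\delta}$ at its centre, it suffices to bound $\sum_k|T_k|_{n-1}\mu_k^p$, where $T_k=T^{C\delta}_{e_k}(a_k)$ is a near-optimal tube for $(f)^*_{C\delta}(e_k)$ (after splitting $\{e_k\}$ into boundedly many $C\delta$-separated subfamilies so that \eqref{discreteversion} applies at scale $C\delta$) and $\mu_k=|T_k|_{n-1}^{-1}\int_{T_k}f$. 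Sort the indices into the $O(\log(1/\delta))$ dyadic classes $\mathcal K_\lambda=\{k:\mu_k\approx\lambda\}$; the relevant range of $\lambda$ is bounded above by $\mu_k\lesssim\delta^{-(n-2)/p}$ and below since classes with $\mu_k$ smaller than a fixed large power of $\delta$ contribute negligibly. For each class, pair $f$ against $\sum_{k\in\mathcal K_\lambda}\chi_{T_k}$ and apply \eqref{discreteversion}: this gives $\lambda\,\delta^{n-2}|\mathcal K_\lambda|\lesssim_\varepsilon\delta^{-h-\varepsilon}\bigl(\delta^{n-2}|\mathcal K_\lambda|\bigr)^{1/p'}$, hence $\delta^{n-2}|\mathcal K_\lambda|\lesssim_\varepsilon\lambda^{-p}\delta^{-hp-p\varepsilon}$, and therefore $\sum_{k\in\mathcal K_\lambda}|T_k|_{n-1}\mu_k^p\approx\lambda^p\,\delta^{n-2}|\mathcal K_\lambda|\lesssim_\varepsilon\delta^{-hp-p\varepsilon}$. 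Summing over the $O(\log(1/\delta))$ classes and absorbing the logarithm into $\delta^{-\varepsilon}$ finishes the argument.

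The genuinely routine parts are the cap/tube volume bookkeeping and the containment $T^\delta_e(a)\subseteq T^{C\delta}_{e'}(a)$. The one place that needs care — and really the only point of the proposition — is producing exactly the factor $\bigl(\sum_k|T_k|_{n-1}\bigr)^{1/p'}$ in \eqref{discreteversion}: this forces one to integrate $(F^{p'-1})^*_{C\delta}$ only over the union of caps $\Omega$ rather than over all of $S^{n-2}$ in the forward direction, and to localise dyadically in the densities $\mu_k$ (rather than bound a single $L^p$ average) in the backward direction, all while keeping the $\varepsilon$- and logarithmic losses under control.
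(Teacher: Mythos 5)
The paper offers no proof of this proposition --- it is quoted directly from \cite[Proposition 22.6]{M15} --- and your argument is a correct reconstruction of the standard duality proof given there: the forward direction via pairing $F=\sum_k\chi_{T_k}$ against $F^{p'-1}$, integrating the maximal function only over the union of caps, and using $(p'-1)p=p'$; the converse via discretisation into caps and dyadic pigeonholing in the tube densities. Both directions are sound, including the bookkeeping at scale $C\delta$ and the absorption of the $\log(1/\delta)$ factor into $\delta^{-\varepsilon}$, so there is nothing to correct.
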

Next, we apply the result about Kakeya maximal function in $\RR^{n-1}$ from Hickman, Rogers and Zhang \cite{HRZ22}.
\begin{thm}[\cite{HRZ22}]\label{hickman}
Let 
\begin{equation}
	w(n-1)=1+\min_{\substack{2 \leq t \leq n-1\\ t \in \mathbb{N}}} \max \paree{\frac{2(n-1)}{(n-2)(n-1)+(t-1)t},\frac{1}{n-t}}. \label{w(n-1)}
\end{equation}
Then for all $\varepsilon >0$ and $0 < \delta <1$, 
	 \begin{equation}
	 	\bignnn{\summ{k=1}{m}\chi_{T_k}}{L^{p}(\RR^{n-1})}{} \lesssim_{\varepsilon}\delta^{-(n-2-\frac{n-1}{p})-\varepsilon}\pare{\summ{k=1}{m}\abso{T_k}_{n-1}}^{\frac{1}{p}}\nonumber
	 \end{equation}
	 when $p \geq w(n-1)$. 
\end{thm}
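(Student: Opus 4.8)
Since Theorem~\ref{hickman} is quoted verbatim from Hickman, Rogers and Zhang~\cite{HRZ22}, I will only outline the strategy rather than attempt a self-contained argument. The first move is a reduction: by Proposition~\ref{discreteprop} applied in $\RR^{n-1}$ (with the exponent $p$ in the statement playing the role of the conjugate $p_{n-1}'$), the asserted bound on $\bignnn{\summ{k=1}{m}\chi_{T_k}}{L^{p}(\RR^{n-1})}{}$ is equivalent to an $L^{p_{n-1}}$ Kakeya maximal estimate at scale $\delta$ on $S^{n-2}$ with loss $\delta^{-h_{n-1}-\varepsilon}$, where $h_{n-1}=n-2-\tfrac{n-1}{p}$. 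So it suffices to establish this maximal estimate for every $p \ge w(n-1)$ and then translate back.

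The engine behind the maximal estimate is the polynomial method combined with a Bourgain--Guth broad/narrow decomposition. Localising to a ball, one separates, at a given point, the contribution of tubes whose directions are genuinely $t$-dimensional (the ``broad'' part) from that of tubes clustered in the $\delta$-neighbourhood of a bounded-degree algebraic variety of dimension less than $t$ (the ``narrow'' part). The broad part is controlled by the multilinear Kakeya inequality of Bennett--Carbery--Tao in Guth's sharp form (equivalently, a $k$-broad estimate): a $t$-linear bound, combined with the trivial estimate on the remaining directions through the Bourgain--Guth iteration, produces a gain of the shape $\tfrac{2(n-1)}{(n-2)(n-1)+(t-1)t}$ --- the first exponent inside the $\max$ in \eqref{w(n-1)}.

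For the narrow part transversality is of no help, so one instead invokes the polynomial Wolff axioms of Guth--Zahl --- that a variety of controlled degree and dimension $t$ contains only a limited number of essentially distinct $\delta$-tubes, a property verified for genuine tubes by Katz--Rogers --- which gives the competing exponent $\tfrac{1}{n-t}$. Polynomial partitioning then links the scales: one picks a polynomial whose zero set equidistributes the tubes, bounds the cellular pieces by induction on the number of tubes, and treats the tubes trapped near the zero set by descending to a genuinely lower-dimensional Kakeya problem, for which the analogous estimate is already in hand; a ``grains decomposition'' keeps track of the nested algebraic structure at each stage, and iterating closes the induction on scales (and on dimension). Taking the better of the broad and narrow bounds and optimising over the admissible range $2 \le t \le n-1$ yields precisely $w(n-1)$.

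The decisive difficulty --- and the bulk of \cite{HRZ22} --- is making the narrow/polynomial-partitioning induction run with uniform constants: one must control tubes tangent to several nested varieties at many scales simultaneously and combine this with the quantitative polynomial Wolff axioms. It is here, rather than in the (by now standard) broad estimate or the formal reduction via Proposition~\ref{discreteprop}, that the real work lies.
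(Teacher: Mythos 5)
The paper provides no proof of this statement---it is imported verbatim from \cite{HRZ22}, so there is nothing internal to compare against. Your outline of the Hickman--Rogers--Zhang argument (broad/narrow decomposition, multilinear Kakeya controlling the broad part and giving the exponent $\tfrac{2(n-1)}{(n-2)(n-1)+(t-1)t}$, polynomial Wolff axioms and polynomial partitioning controlling the narrow part and giving $\tfrac{1}{n-t}$, optimised over $t$) is a fair summary at this level of detail, and deferring the substance to the cited work is exactly what the paper itself does.
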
 
Let $w(n-1)' $ be the dual index of $w(n-1)$ satisfying $\frac{1}{w(n-1)}+\frac{1}{w(n-1)'}=1$. Therefore, \eqref{discreteversion} is true with 
\begin{equation}
	h_{n-1}=n-2-\frac{n-1}{p_{n-1}'}\nonumber
\end{equation} 
whenever $p_{n-1}' \geq w(n-1)$, where $p_{n-1}'$ is the dual index of $p_{n-1}$. Note that $p_{n-1}' \geq w(n-1)$ is equivalent to $p_{n-1} \leq w(n-1)'$. By Proposition \ref{discreteprop}, we obtain the result for Kakeya maximal function in $\RR^{n-1}$.
\begin{thm} \label{general_n-1}
	Let $h_{n-1}=(n-2-\frac{n-1}{p_{n-1}'})$. Then 
	\begin{equation}
		\nnn{(f)^*_{\delta}}{L^{p_{n-1}}(S^{n-2})}{} \lesssim_{\varepsilon} C \delta^{-h_{n-1}-\varepsilon}\nnn{f}{L^{p_{n-1}}(\mathbb{R}^{n-1})}{}\nonumber
	\end{equation}
	holds for all $p_{n-1} \leq w(n-1)'$.
\end{thm}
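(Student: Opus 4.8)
The plan is to obtain Theorem~\ref{general_n-1} by feeding the discrete tube estimate of Hickman, Rogers and Zhang (Theorem~\ref{hickman}) into the duality dictionary of Proposition~\ref{discreteprop}. No new analytic input is required; the argument is a bookkeeping exercise with dual exponents. First I would note that, writing $p_{n-1}'$ for the H\"older conjugate of $p_{n-1}$ and $w(n-1)'$ for the dual index of $w(n-1)$, the hypothesis $p_{n-1} \le w(n-1)'$ is equivalent to $p_{n-1}' \ge w(n-1)$. Hence $p := p_{n-1}'$ is an admissible exponent in Theorem~\ref{hickman}, which accordingly gives
\[
\bignnn{\summ{k=1}{m}\chi_{T_k}}{L^{p_{n-1}'}(\RR^{n-1})}{} \lesssim_{\varepsilon}\delta^{-(n-2-\frac{n-1}{p_{n-1}'})-\varepsilon}\pare{\summ{k=1}{m}\abso{T_k}_{n-1}}^{\frac{1}{p_{n-1}'}}
\]
for all $\delta$-separated $\delta$-tubes $T_1,\dots,T_m$ in $\RR^{n-1}$ and all $\varepsilon>0$.

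This is precisely inequality \eqref{discreteversion} with $h_{n-1}=n-2-\frac{n-1}{p_{n-1}'}$, the exponent named in the statement. I would then apply the direction of Proposition~\ref{discreteprop} asserting that \eqref{discreteversion} implies the Kakeya maximal estimate, to conclude
\[
\nnn{(f)^*_{\delta}}{L^{p_{n-1}}(S^{n-2})}{} \lesssim_{\varepsilon} \delta^{-h_{n-1}-\varepsilon}\nnn{f}{L^{p_{n-1}}(\RR^{n-1})}{}
\]
for all $f \in L^1_{loc}(\RR^{n-1})$ and $\varepsilon>0$, which is the assertion of the theorem (any fixed constant $C$ being absorbed into $\lesssim_{\varepsilon}$).

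Since everything else is citation, the only step I expect to deserve comment is verifying the standing hypothesis $h_{n-1}>0$ of Proposition~\ref{discreteprop}. As $h_{n-1}=n-2-\frac{n-1}{p_{n-1}'}$ is increasing in $p_{n-1}'$ and $p_{n-1}'\ge w(n-1)$, it is enough to check $w(n-1)>\frac{n-1}{n-2}$, that is, that the minimum in \eqref{w(n-1)} strictly exceeds $\frac{1}{n-2}$. This is a short finite verification: for $t\ge 3$ the term $\frac{1}{n-t}$ already exceeds $\frac{1}{n-2}$, while for $t=2$ one has $\frac{2(n-1)}{(n-2)(n-1)+2}>\frac{1}{n-2}$ exactly when $(n-1)(n-2)>2$, which holds for every $n\ge 4$; since this corollary is only applied for such $n$ (the case $n=3$ being handled directly via the planar Kakeya maximal conjecture), the argument is complete. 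Alternatively, one may simply restrict the statement to those $p_{n-1}$ with $h_{n-1}>0$, which costs nothing because for $h_{n-1}\le 0$ the conclusion is trivial.
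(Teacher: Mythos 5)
Your proposal is correct and follows exactly the route the paper takes: the theorem is obtained by noting $p_{n-1}\le w(n-1)'$ is equivalent to $p_{n-1}'\ge w(n-1)$, feeding Theorem \ref{hickman} into the duality statement of Proposition \ref{discreteprop} with $h_{n-1}=n-2-\frac{n-1}{p_{n-1}'}$. Your additional check that $h_{n-1}>0$ is a sensible (and correct) piece of due diligence that the paper leaves implicit.
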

Using this and Corollary \ref{n-gs}, together with Corollary \ref{n-s}, we have the explicit lower bound of $\haus K_A$ in $\RR^n$.

\begin{cor} \label{generalcor_ub}
	Let $A \subseteq \RR^n$ with $\ubox A \leq s$ and $K_A$ be an $A$-restricted Kakeya set in $\RR^n$.  Then \begin{equation}
	\begin{split}
\haus K_A &\geq \max_{1 \leq p \leq w(n-1)' } \paree{n-\frac{n-1-s}{p}-(s-1),n-s}\\ &=\begin{cases}
n-s, &\, 0 \leq s< n-1-w(n-1)'\\
n-\frac{n-1-s}{w(n-1)'}-(s-1) , & n-1-w(n-1)' \leq s< n-1\\
2, & n-1 \leq s \leq n,
\end{cases}
\end{split}\nonumber
\end{equation}
where $w(n-1)'$ is the dual index of $w(n-1)$ in \eqref{w(n-1)}.
\end{cor}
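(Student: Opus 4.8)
The plan is to combine Corollary~\ref{n-s} with Corollary~\ref{n-gs}, where the latter is fed by the Kakeya maximal estimate of Hickman, Rogers and Zhang in dimension $n-1$, and then to carry out the resulting one-variable optimisation. First I would recall that Theorem~\ref{general_n-1} (obtained from Theorem~\ref{hickman} via Proposition~\ref{discreteprop}) supplies the hypothesis \eqref{result_n-1} of Corollary~\ref{n-gs} in $\RR^{n-1}$, with exponent $p_{n-1}$ and $h_{n-1}=n-2-\frac{n-1}{p_{n-1}'}$, valid for every $p_{n-1}\leq w(n-1)'$. On the range $1<p_{n-1}\leq w(n-1)'$ one has $h_{n-1}>0$ (this is where one uses that $w(n-1)'<n-1$), so Corollary~\ref{n-gs} applies for each such $p_{n-1}$, giving $\haus K_A\geq n-g_n(s)$ with $g_n(s)=\frac{h_{n-1}p_{n-1}+sp_{n-1}-s}{p_{n-1}}$.

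Next I would simplify the exponent $n-g_n(s)$. Since $\frac{1}{p_{n-1}'}=1-\frac{1}{p_{n-1}}$ we have $h_{n-1}=\frac{n-1}{p_{n-1}}-1$, and hence
\[
n-g_n(s)=n-\left(h_{n-1}+s-\frac{s}{p_{n-1}}\right)=(n+1-s)-\frac{n-1-s}{p_{n-1}}=n-\frac{n-1-s}{p_{n-1}}-(s-1).
\]
Thus Corollary~\ref{n-gs} yields $\haus K_A\geq n-\frac{n-1-s}{p}-(s-1)$ for every $p\in(1,w(n-1)']$, while Corollary~\ref{n-s} yields $\haus K_A\geq n-s$. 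Since $\haus K_A$ is at least each of these quantities, it is at least the supremum of $n-\frac{n-1-s}{p}-(s-1)$ over such $p$ together with $n-s$; because $p\mapsto n-\frac{n-1-s}{p}-(s-1)$ is continuous and monotone, this supremum equals the maximum over the closed interval $[1,w(n-1)']$ displayed in the statement, and it remains only to evaluate that maximum.

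Finally I would carry out the case analysis. Write $\phi(p):=n-\frac{n-1-s}{p}-(s-1)$; this is increasing in $p$ when $s<n-1$ and decreasing when $s>n-1$ (and constant, equal to $2$, when $s=n-1$), so on $[1,w(n-1)']$ its maximum is attained at an endpoint: it equals $\phi(w(n-1)')=n-\frac{n-1-s}{w(n-1)'}-(s-1)$ when $s\leq n-1$ and equals $\phi(1)=2$ when $s\geq n-1$ (the value $2$ being obtained in the limit $p\to 1^+$). Now $\phi(w(n-1)')\geq n-s$ precisely when $s\geq n-1-w(n-1)'$, and $2\geq n-s$ whenever $s\geq n-2$; distinguishing the three regimes $0\leq s<n-1-w(n-1)'$, $n-1-w(n-1)'\leq s<n-1$, and $n-1\leq s\leq n$ then produces the stated piecewise formula.

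I do not anticipate a genuine obstacle: the statement is a bookkeeping corollary of Theorem~\ref{general_n-1}, Corollary~\ref{n-s} and Corollary~\ref{n-gs}. The only points demanding care are the passage between $p_{n-1}$ and its dual exponent $p_{n-1}'$ when reading off $h_{n-1}$ and the admissible range from Theorem~\ref{hickman}, the verification that $h_{n-1}>0$ on that range (so that Corollary~\ref{n-gs} genuinely applies), and the endpoint analysis of the optimisation — in particular confirming that the supremum over the half-open range $(1,w(n-1)']$ coincides with the maximum over the closed interval $[1,w(n-1)']$ appearing in the corollary.
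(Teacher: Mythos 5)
Your proposal is correct and follows exactly the route the paper takes: the paper proves this corollary in one line by citing Theorem \ref{general_n-1} (which feeds hypothesis \eqref{result_n-1} into Corollary \ref{n-gs}) together with Corollary \ref{n-s}, leaving the substitution $h_{n-1}=\frac{n-1}{p_{n-1}}-1$ and the optimisation over $p$ implicit. Your write-up simply makes that bookkeeping explicit, including the endpoint/limit issues at $p=1$ and $p=w(n-1)'$, and the computations check out.
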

For example when $n=10$, from \cite[Figure 1]{HRZ22}, $w(9)'=6$. Now our lower bound for $\haus K_A$ in $\RR^{10}$ is
\begin{equation}
	\begin{split}
\haus K_A \geq \begin{cases}
10-s, &\, 0 \leq s< 3\\
\frac{19}{2}-\frac{5}{6}s , & 3 \leq s< 9\\
2, & 9 \leq s \leq 10,
\end{cases}
\end{split}\nonumber
\end{equation}
see Figure \ref{n=10}.
\begin{figure}[htbp]
		\centering
		\includegraphics[scale=0.68]{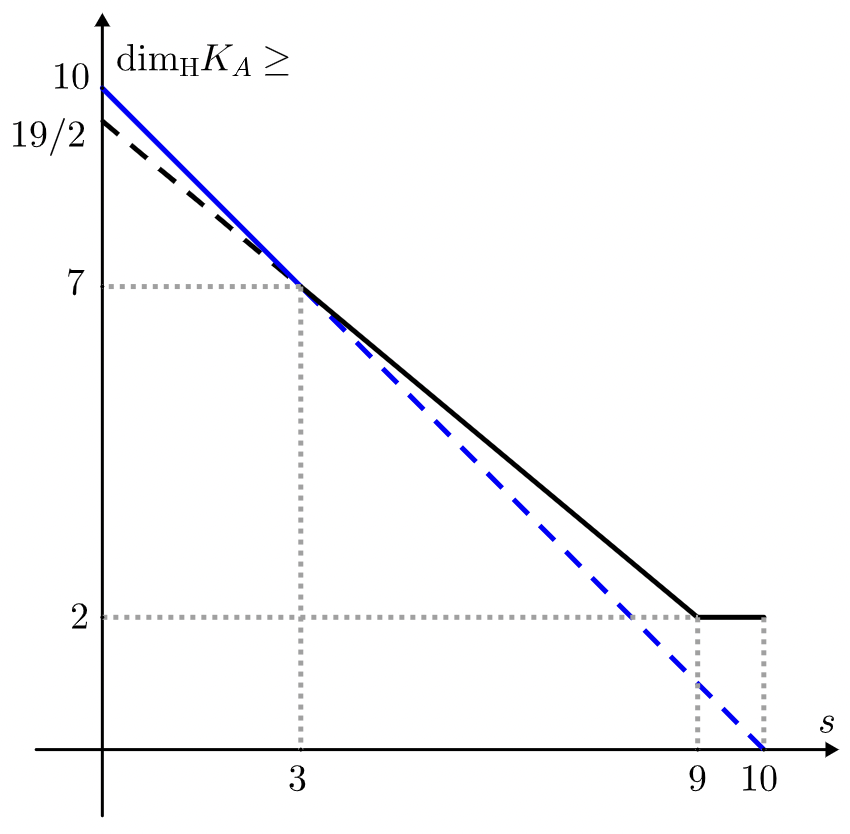} 
		\caption{Lower bound for $\haus K_A$ in $\RR^{10}$ under the assumption that $\ubox A \leq s$. The blue bound comes from Corollary \ref{n-s} and the black bound comes from Corollary \ref{n-gs}. The best lower bound is the maximum of these two and is shown as a solid line.}
		\label{n=10}
\end{figure}
In general,   the Hausdorff dimension of a Kakeya set in $\mathbb{R}^{10}$ is at least  $15 - 6\sqrt{2} \approx 6.515$, as given by Katz and Tao \cite{KT02}.   This is again relevant because it means both of our lower bounds are needed; see  Figure \ref{n=10}.

\subsection{Further Remarks}
First, we can relax the upper box-counting dimension assumption on $A$ in Corollary \ref{n-s}, Corollary \ref{n-gs} and Corollary \ref{generalcor_ub} to the packing dimension of $A$. For convenience, we refer to  the lower bound for $\haus K_A$ in these three corollaries by a single  function $f(n,s)$ for $n \geq 3$. It is easy to see that in each case the function $f(n,s)$ is continuous with respect to $s$ when $n$ is fixed.

\begin{cor}\label{packing}
	Suppose $A \subseteq \mathbb{R}^n$. The same results for $\haus K_A$ in Corollary \ref{n-s}, Corollary \ref{n-gs} and Corollary \ref{generalcor_ub} hold if we replace $\ubd A \leq s$ by $\pacd A \leq s$.
\end{cor}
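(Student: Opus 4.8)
The plan is to deduce the packing-dimension statements from the upper-box-dimension statements already established, by covering $A$ with countably many pieces of small upper box dimension and running the existing machinery on a single well-chosen piece, together with the (positive-measure) set of directions that this piece can serve. Write $f(n,s)$ for the common lower bound for $\haus K_A$ furnished by Corollaries \ref{n-s}, \ref{n-gs} and \ref{generalcor_ub} (for $n \geq 3$), and recall that $f(n,\cdot)$ is continuous. Fix $\varepsilon_0 > 0$. Since $\pacd A \leq s$, by the definition of packing dimension --- intersecting the covering sets with $A$, passing to their closures (which does not change their upper box dimension) and then to finite unions --- we may choose compact sets $A_1 \subseteq A_2 \subseteq \cdots$ with $\ubox A_i \leq s + \varepsilon_0$ for all $i$ and $A \subseteq \bigcup_{i \geq 1} A_i$.

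Next I would isolate the relevant piece. For $i \geq 1$, let $S_i = \{e \in S^{n-1} : \text{there is } a \in A_i \text{ with } I_e(a) \subseteq K_A\}$. Each $S_i$ is closed, by a short compactness argument: if $e_k \to e$ and $a_k \in A_i$ satisfy $I_{e_k}(a_k) \subseteq K_A$, then along a subsequence $a_k \to a \in A_i$, and $a_k + t e_k \to a + te \in K_A$ for every $t \in [-1/2,1/2]$, so $I_e(a) \subseteq K_A$ and $e \in S_i$. Moreover $\bigcup_i S_i = S^{n-1}$, since for each $e$ the point $a_e$ guaranteed by the $A$-restricted Kakeya property lies in $A \subseteq \bigcup_i A_i$. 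As the $S_i$ increase, continuity from below of the surface measure provides an index $i_0$ with $|S_{i_0}|_{n-1} \geq \tfrac12 |S^{n-1}|_{n-1} > 0$. The set-valued map $e \mapsto \{a \in A_{i_0} : I_e(a) \subseteq K_A\}$ has closed graph and nonempty compact values over $S_{i_0}$, so a measurable selection theorem supplies a Borel map $e \mapsto a_e \in A_{i_0}$ with $I_e(a_e) \subseteq K_A$ for $e \in S_{i_0}$; put $\Lambda = \bigcup_{e \in S_{i_0}} I_e(a_e)$, so that $\Lambda \subseteq K_A$ and hence $\haus K_A \geq \haus \Lambda$.

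It then remains to bound $\haus \Lambda$ from below by applying the existing estimates with $(A_{i_0}, S_{i_0})$ in place of $(A, S^{n-1})$. Because $\ubox A_{i_0} \leq s + \varepsilon_0$, Theorem \ref{n-sthm} (respectively Theorem \ref{mainresult}) applies verbatim to the $A_{i_0}$-restricted Kakeya maximal function $\mathcal{K}_{\delta,A_{i_0}}$ with $s + \varepsilon_0$ in place of $s$, giving the corresponding weak-type estimates on all of $S^{n-1}$. Inspecting the proof of Lemma \ref{keylemma}, one sees that $S^{n-1}$ enters it only as the set of directions for which a unit segment with midpoint in the restricting set is available; this set may be replaced by any positive-measure measurable subset of $S^{n-1}$ at the cost only of the fixed constant factor $|S_{i_0}|_{n-1}^{1/q}$ in the weak-type lower bound, the remaining losses (from the dyadic pigeonholes over scales and over directions) being subpolynomial in $\delta$ and absorbed as before; the sets involved are measurable thanks to the Borel selection $e \mapsto a_e$. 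Carrying this out with $p = q = n$ and $\beta = (s+\varepsilon_0)/n$ gives the analogue of Corollary \ref{n-s}, and with the exponents of Theorem \ref{mainresult} (with $s$ replaced by $s+\varepsilon_0$) followed by the same optimization over $p$ gives the analogues of Corollaries \ref{n-gs} and \ref{generalcor_ub}; in every case $\haus \Lambda \geq f(n, s + \varepsilon_0)$. Since $\varepsilon_0 > 0$ was arbitrary and $f(n,\cdot)$ is continuous, $\haus K_A \geq f(n,s)$.

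The step I expect to need the most care is the assertion that the proof of Lemma \ref{keylemma} is insensitive to shrinking the direction domain from $S^{n-1}$ to a set of positive surface measure. I anticipate that this is a routine inspection --- the measure of the direction set should enter only through multiplicative constants and a $\delta^{o(1)}$ factor arising from the logarithmically many relevant scales --- but it is the point that must genuinely be checked, alongside the compactness and measurable-selection bookkeeping used to make $S_{i_0}$ and $\Lambda$ behave well.
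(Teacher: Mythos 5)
Your proposal is correct and follows essentially the same route as the paper: cover $A$ by countably many pieces of upper box dimension at most $s+\varepsilon_0$, pigeonhole to find one piece whose associated set of directions has positive surface measure, apply the box-dimension corollaries to that restricted configuration, and let $\varepsilon_0 \to 0$ using continuity of $f(n,\cdot)$. The only real difference is cosmetic: where you argue directly that the proof of Lemma \ref{keylemma} tolerates a direction set of any fixed positive measure (which is right --- that measure enters only as the constant in $\sum_k |S_k|_{n-1} \gtrsim 1$ at the end of the argument), the paper first boosts the direction set to measure $> \frac{1}{2}$ by taking finitely many rotated copies via a Lebesgue density point, which changes neither the Hausdorff dimension of the set nor the upper box dimension of the restricting set; your measurable-selection bookkeeping is harmless but not actually needed, since only superlevel sets of the (lower semicontinuous) maximal function require measurability.
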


\begin{proof}
	Let  $K_A$ be  an $A$-restricted Kakeya set. Since $\pacd A \leq s$, for any $\varepsilon > 0$, there exists a covering $\{A_i\}_{i=1}^{\infty}$ of $A$ such that for each $i$,
	\begin{equation}
		\ubox A_i \leq s + \varepsilon.\nonumber
	\end{equation}
Fix an arbitrary $A_i$, and let $E_i \subseteq S^{n-1}$ be the set of directions for which the corresponding line segments have midpoints in $A_i$:
\begin{equation}
E_i = \{ e : I_e(a_e) \subseteq K_A \text{ and } a_e \in A_i \}.\nonumber
\end{equation}	
Since
\begin{equation}
	\bigcup_{i=1}^{\infty} E_i = S^{n-1},\nonumber
\end{equation}
there exists some $E_k$ such that 
\begin{equation}
	|E_k|_{n-1} > 0.\nonumber
\end{equation}	
By Lebesgue's density theorem on $S^{n-1}$, there exist finitely many elements $r_1, \dots, r_N$ in the orthogonal group of $S^{n-1}$ such that 
\begin{equation}
	\left| \bigcup_{i=1}^{N} r_i(E_k) \right|_{n-1} > \frac{1}{2}.\nonumber
\end{equation}
In fact, there exists a point $x_0 \in E_k$, such that for sufficiently small radius $\delta$, the open ball $B(x_0,\delta)$ satisfies
\begin{equation}
	\frac{\abso{B(x_0,\delta)\cap E_k}_{n-1}}{\abso{B(x_0,\delta)}_{n-1}} > \frac{99}{100}. \nonumber
\end{equation}
For some large $N\lesssim \delta^{1-n}$, we then choose $r_1, \dots, r_N \in O(n)$ such that $\paree{r_i(B(x_0,\delta)) \cap S^{n-1}}_{i=1}^{N}$ are disjoint. Consequently, we obtain the estimate  
\begin{align*}
	\abso{\bigcup_{i=1}^{N} r_i(E_k)}_{n-1} & \geq \abso{\bigcup_{i=1}^{N} r_i(B(x_0,\delta)\cap E_k)}_{n-1}\\
	&=\summ{i=1}{N} \abso{r_i(B(x_0,\delta)\cap E_k)}_{n-1}\\
	& > \frac{99}{100}N \abso{B(x_0,\delta)\cap S^{n-1}}_{n-1}\\
	& > \frac{1}{2}
\end{align*}
provided  $N$ is large enough so that $N \abso{B(x_0,\delta)\cap S^{n-1}}_{n-1} > \frac{2}{3}$.
Let $K_A^k$ be the subset of $K_A$ consisting of line segments with directions in $E_k$. Let $E_0 = \bigcup_{i=1}^{N} r_i(E_k)$, $A_0 = \bigcup_{i=1}^{N} r_n(A_k)$ and $K_0 = \bigcup_{i=1}^{N} r_n(K_A^k)$. One can see the collection of midpoints in $K_0$ is $A_0$ and the collection of directions in $K_0$ is $E_0$ with $\abso{E_0}_{n-1} > \frac{1}{2}$. Then
\begin{equation}
	\ubox A_0 = \ubox A_k \leq s + \varepsilon.\nonumber
\end{equation}
By Corollary \ref{n-s}, Corollary \ref{n-gs} or Corollary \ref{generalcor_ub}, we obtain $\haus K_0 \geq f(n,s+\varepsilon)$. Since $K_0$ is a union of finitely many copies of $K_A^k$, it follows that
\begin{equation}
	\haus K_A \geq \haus K_A^k = \haus K_0 \geq f(n,s+\varepsilon).\nonumber
\end{equation}
Letting $\varepsilon \to 0$ completes the proof.
\end{proof}

The next corollary shows that it is unnecessary to restrict the \emph{mid}points to $A$. In fact, we can take one point from each unit line segment at \emph{any} position, and we will still obtain the same lower bound for the Hausdorff dimension.

\begin{cor} \label{packing-arbitrary}
	Suppose $K$ is a Kakeya set in $\mathbb{R}^n$, and let $P \subseteq K$ be a set such that for each $I_e(a_e) \subseteq K$, the intersection $P \cap I_e(a_e)$ is nonempty.  
	If $\pacd P \leq s$, then $\haus K \geq f(n,s)$, where the function $f(n,s)$ refers to the lower bound  in Corollary \ref{n-s}, Corollary \ref{n-gs} or Corollary \ref{generalcor_ub}.
\end{cor}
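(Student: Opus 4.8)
The plan is to follow the proof of Corollary~\ref{packing} almost verbatim, the only genuinely new input being the observation that every estimate entering the definition of $f(n,s)$ --- Theorems~\ref{n-sthm} and \ref{mainresult}, together with Proposition~\ref{discreteprop} and Theorem~\ref{general_n-1} --- uses the restricting set only through the fact that \emph{the core segment meets it}, and is completely insensitive to where along the segment this intersection occurs.

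First I would record an elementary reduction turning an arbitrary marked point into an endpoint. If $p\in I_e(a)$, then one of the two sub-segments of $I_e(a)$ having $p$ as an endpoint has length at least $1/2$; keeping the sub-segment of this longer part that starts at $p$ and has length exactly $1/2$ produces a segment $J\subseteq I_e(a)$ with one endpoint equal to $p$ and direction $\pm e$. Applied to the hypothesis of the corollary, this gives, for every $e\in S^{n-1}$, a sub-segment $J_e\subseteq I_e(a_e)\subseteq K$ of length $1/2$ with an endpoint $p_e\in P$. At scale $\delta$ the tube $\tube{e}{a_e}$ therefore contains a $\delta$-tube about $J_e$, of length $1/2$, passing through $B(p_e,\delta)$; this is exactly the configuration consumed by the bush argument, the only difference from the midpoint case being that these sub-tubes have length comparable to $1/2$ rather than $1$, which changes the bush lower bound --- and the hyperplane-slicing step of Theorem~\ref{mainresult} --- by at most absolute multiplicative constants. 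Re-running the proofs of Theorems~\ref{n-sthm} and \ref{mainresult}, Proposition~\ref{discreteprop} and Theorem~\ref{general_n-1} with these sub-tubes in place of the full tubes then yields, for each of the parameter choices underlying Corollaries~\ref{n-s}, \ref{n-gs} and \ref{generalcor_ub} and every $P\subseteq\RR^n$ with $\ubox P\le s$, the weak-type bound $\|\mathcal M(\chi_E)\|_{L^{q,\infty}(S^{n-1})}\lesssim_\varepsilon\delta^{-\beta-\varepsilon}\|\chi_E\|_{L^p(\RR^n)}$, where $\mathcal M(\chi_E)(e)$ denotes the supremum of $|\tube{e}{a}|^{-1}\int_{\tube{e}{a}}\chi_E\,dx$ over all $a$ with $I_e(a)\cap P\neq\emptyset$.

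Given these estimates, the proof of Lemma~\ref{keylemma} goes through with $\mathcal M$ in place of $\mathcal K_{\delta,A}$: taking $E$ to be a $\delta$-neighbourhood of $K$ we have $\mathcal M(\chi_E)(e)=1$ for all $e$, since $\tube{e}{a_e}\subseteq E$ and $I_e(a_e)\cap P\neq\emptyset$, and hence $\haus K\ge f(n,s)$ whenever $\ubox P\le s$. To weaken $\ubox P\le s$ to $\pd P\le s$ one repeats the argument of Corollary~\ref{packing} without change: cover $P$ by countably many sets $P_i$ with $\ubox P_i\le s+\varepsilon$, pigeonhole a positive-measure set of directions whose marked point $p_e$ lies in a fixed $P_k$, enlarge it to a set of directions of measure exceeding $1/2$ by applying finitely many rotations (which preserve the property ``the segment meets this rotated copy of $P_k$'' and do not raise its box dimension), apply the case just proved, and let $\varepsilon\to0$ using the continuity of $s\mapsto f(n,s)$.

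The main obstacle is to check, at the level of the actual proofs rather than their statements, that the bush argument --- and, more delicately, the step in Theorem~\ref{mainresult} that slices the bush by hyperplanes and invokes the $\RR^{n-1}$ Kakeya maximal estimate --- really is unaffected by the position of the marked point. Concretely, one must confirm that a $\delta$-tube of length comparable to $1/2$ through the bush apex still crosses a positive-measure family of nearby parallel hyperplanes transversally in $\delta$-tubes, so that the $\RR^{n-1}$ estimate applies exactly as in the unrestricted case, the only cost being that one uses ``one side'' of the bush rather than two. A minor point to verify is that the analogue of Lemma~\ref{keylemma} for $\mathcal M$ yields the Hausdorff and not merely the lower box dimension bound, but this is immediate from the proof of Lemma~\ref{keylemma}, which only invokes the weak-type inequality along a single sequence of scales.
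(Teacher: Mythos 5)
Your proposal matches the paper's own argument: the paper's proof sketch performs exactly the same reduction (replace the marked point by an endpoint of a half-length sub-segment $I'_e$, note that all the maximal-function estimates are insensitive to where along the segment the restriction point sits, and conclude via $\haus K \ge \haus K'$), and it even offers your modified maximal function $\mathcal{M}$ as an equivalent alternative, in the guise of $\mathcal{K}^*_{\delta,P}$. Your additional care about the hyperplane-slicing step and the packing-dimension upgrade via the argument of Corollary~\ref{packing} is consistent with, and slightly more explicit than, what the paper records.
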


\noindent \emph{Proof sketch.} 
First observe that all of our arguments go through if we replace \emph{mid}point with \emph{end}point,   consider line segments of length 1/2 instead of length 1, and replace all directions with a set of directions of positive measure.  Indeed, we only chose to consider the midpoints for some aesthetic reasons. Second, for each unit line segment $ I_e(a_e) $ in the Kakeya set $ K $, let $ x_e \in P \cap I_e(a_e) $. There exists a sub-segment $ I'_e(a_e) $ of length $ \frac{1}{2} $ within $ I_e(a_e) $, with one of its endpoints at $ x_e $. Defining $ K' $ as the union of all such $ \frac{1}{2} $-length segments, we have $ K' \subseteq K $, and $ P $ becomes the collection of endpoints of each segment in $ K' $.  The desired  conclusion follows   since $\haus K \geq \haus K'$.




	Alternatively, we could introduce a new $A$-restricted Kakeya maximal function defined as  
	\begin{equation}
		\mathcal{K}_{\delta,P}^*(f)(e)=\sup_{\substack{-\frac{1}{2} \leq t \leq \frac{1}{2}\\ a_e \in P+t \cdot e}}\frac{1}{\abso{\tube{e}{a_e}}}\intt{\tube{e}{a}}{}\abso{f(x)}dx. \nonumber
	\end{equation} 
	This operator allows the tubes to shift along the direction $e$ based on the set $P$. It is straightforward to verify that all relevant lemmas and theorems still hold for this new maximal function. \hfill \qed \\

Finally, the next corollary gives a new sufficient condition  for the Kakeya conjecture to hold.
\begin{cor} \label{kakeyacor}
	For any Kakeya set $K \subseteq \mathbb{R}^n$, for all $\varepsilon > 0$, if there exists a subset $P$ of $K$ with $\pacd P < \varepsilon$ such that for each $I_e(a_e) \subseteq K$, the intersection $P \cap I_e(a_e)$ is nonempty, then $\haus K = n$.
\end{cor}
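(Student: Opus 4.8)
The plan is to obtain Corollary \ref{kakeyacor} as an immediate consequence of Corollary \ref{packing-arbitrary}, together with the elementary observation that the lower bound function $f(n,s)$ tends to $n$ as $s \to 0^+$. Corollary \ref{packing-arbitrary} already packages all of the real work: it asserts that whenever $P \subseteq K$ meets every segment $I_e(a_e) \subseteq K$ and $\pacd P \leq s$, then $\haus K \geq f(n,s)$, where $f(n,s)$ may be taken to be any of the lower bounds from Corollaries \ref{n-s}, \ref{n-gs}, \ref{generalcor_ub} (each extended to packing dimension via Corollary \ref{packing}). So the only new content in the present corollary is the limiting statement, which follows because one admissible choice of $f$ is the bound $n-s$ coming from Corollary \ref{n-s}.

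Concretely, I would argue as follows. First, fix $\varepsilon > 0$ and the hypothesized set $P \subseteq K$ with $\pacd P < \varepsilon$ having nonempty intersection with each segment $I_e(a_e) \subseteq K$. Second, apply Corollary \ref{packing-arbitrary} with $s = \varepsilon$ (using $\pacd P \leq \varepsilon$) to get $\haus K \geq f(n,\varepsilon)$. Third, recall that one admissible choice of $f$ is the bound $n-s$ of Corollary \ref{n-s} in its packing-dimension form (Corollary \ref{packing}); hence $f(n,\varepsilon) \geq n - \varepsilon$ and therefore $\haus K \geq n - \varepsilon$. Finally, since the hypothesis is assumed for every $\varepsilon > 0$ (equivalently, one has a single set $P$ of packing dimension zero meeting every segment of $K$), letting $\varepsilon \to 0^+$ yields $\haus K \geq n$; as $K \subseteq \mathbb{R}^n$ forces $\haus K \leq n$, we conclude $\haus K = n$.

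I do not expect any genuine obstacle: the proof is a one-line deduction from Corollary \ref{packing-arbitrary}. The only points worth spelling out are (i) that $f(n,s) \to n$ as $s \to 0$, which is immediate from $f(n,s) \geq n - s$, and (ii) the reading of the quantifiers, since the inequality $\haus K \geq f(n,\varepsilon)$ is only pushed to equality $n$ in the limit $\varepsilon \to 0$, so the operative form of the hypothesis is that sets $P$ of arbitrarily small (or zero) packing dimension intersecting every segment of $K$ exist. If one prefers, the same conclusion can instead be read off directly from the shifted maximal operator $\mathcal{K}^*_{\delta,P}$ introduced in the proof of Corollary \ref{packing-arbitrary}, but this adds nothing of substance. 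It may also be worth a sentence noting that this recovers the "$B(n,P) \to n$ as $\pd P \to 0$" assertion of Theorem A and records it as a new sufficient condition for the Kakeya conjecture.
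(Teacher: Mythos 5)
Your proposal is correct and matches the paper's own proof, which is exactly the one-line deduction from Corollary \ref{packing-arbitrary} combined with the $n-s$ bound of Corollary \ref{n-s}; you simply spell out the quantifier reading and the limit $\varepsilon \to 0$ that the paper leaves implicit.
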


\begin{proof}
	This follows directly from Corollary \ref{n-s} and Corollary \ref{packing-arbitrary}.
\end{proof}

\section{Remaining Proofs} \label{section3}

\subsection{Proof of Lemma \ref{keylemma}} \label{lemmaproof}

The inequality \eqref{ineq} is equivalent to 
	\begin{align} \abso{\paree{e \in S^{n-1} : \McK{\chi_E}(e) > \lambda}}_{n-1} \lesssim_\varepsilon  \pare{\lambda^{-1} \delta^{-\beta-\varepsilon} |E|^{1/p}_n}^q, \label{equivalent2.1}
	\end{align}
	for all measurable sets $E \subseteq \RR^n$, $0<\lambda <1$, $\varepsilon>0$ and $0< \delta <1$.
	Suppose $K_A$ is  an $A$-restricted Kakeya set . Let $\{B_j\}_{j=1}^{\infty}$ be a covering of $K_A$, where $B_j = B(x_j, r_j)$ is an open ball. We can assume that $r_j \leq \frac{1}{100}$ for all $j \in \mathbb{N}$. Define $J_k = \paree{j : \frac{1}{2^k} \leq r_j < \frac{2}{2^k} }$. For every $e \in S^{n-1}$, $K_A$ contains a unit line segment $I_e(a_e)$ parallel to $e$ with $a_e \in A$. Let $S_k = \paree{e \in S^{n-1} : |I_e(a_e) \cap \cup_{j \in J_k} B_j |_1 \geq \frac{1}{100k^2}}$. Then we can see $\cup_{k=1}^\infty S_k = S^{n-1}$. In fact, if there exists an $e' \in  S^{n-1}$, but $e' \notin S_k$ for all $k$, then 
	\begin{equation}
		|I_{e'}(a_{e'}) \cap \cup_{j \in J_k} B_j |_1 < \frac{1}{100k^2} \nonumber
	\end{equation}
	for all $k$. Since $\{B_j\}_{j=1}^{\infty}$ is also a covering of $I_{e'}(a_{e'})$, 
	\begin{align*}
		\summ{k}{}|I_{e'}(a_{e'}) \cap \cup_{j \in J_k} B_j |_1 \geq |I_{e'}(a_{e'}) \cap \cup_{j=1}^{\infty} B_j |_1 =1.
	\end{align*}
	However, also
	\begin{align*}
		\summ{k}{}|I_{e'}(a_{e'}) \cap \cup_{j \in J_k} B_j |_1 < \summ{k}{}\frac{1}{100k^2} <1,
	\end{align*}
	which gives a contradiction. Therefore, $\cup_{k=1}^\infty S_k = S^{n-1}$
	
	Let $F_k = \cup_{j \in J_k} 10B_j$, where $10B_j$ denotes an open ball centred at the same point as $B_j$ but with a radius enlarged by a factor of 10. Define the function $f$ as the characteristic function of $F_k$, i.e., $f = \chi_{F_k}$.

For $e \in S_k = \{ e \in S^{n-1} : |I_e(a_e) \cap \cup_{j \in J_k} B_j |_1 \geq \frac{1}{100k^2} \}$, the intersection $F_k \cap T^{2^{-k}}_e(a_e)$ occupies a larger portion of $T^{2^{-k}}_e(a_e)$ than $I_e(a_e)$ intersecting $\cup_{j \in J_k} B_j$ within the line segment $I_e(a_e)$, which implies
	\mm{
	\frac{\abso{T^{2^{-k}}_e(a_e) \cap F_k}_n}{|T^{2^{-k}}_e(a_e)|_n} \gtrsim |I_e(a_e) \cap \cup_{j \in J_k} B_j |_1\gtrsim \frac{1}{k^2}.
	}
	Hence, when $e \in S_k$, 
	\mm{
	\cK_{2^{-k},A}(f)(e) \geq \frac{\abso{T^{2^{-k}}_e(a_e) \cap F_k}_n}{|T^{2^{-k}}_e(a_e)|_n}  \gtrsim \frac{1}{k^2}.
	}
	From $\eqref{equivalent2.1}$, 
	\mm{
	\abso{\paree{e \in S^{n-1}: \cK_{2^{-k},A}(f)(e)\gtrsim \frac{1}{k^2}}}_{n-1} \lesssim_{\varepsilon}  \pare{k^2 2^{k (\beta+\varepsilon)}|F_k|^{1/p}_n}^q. 
	}
	Thus,
	\mm{
	|S_k|_{n-1} \lesssim_{\varepsilon} k^{2q}2^{kq(\beta+\varepsilon)}(\#J_k)^{\frac{q}{p}}2^{-kn\frac{q}{p}}.
	}
	When $k$ is sufficiently large, $k^{2q} \leq 2^{k \varepsilon}$, so 
	\mm{
	|S_k|_{n-1} \lesssim_{\varepsilon} 2^{-k(n\frac{q}{p}-\beta q - (q+1)\varepsilon)}(\#J_k)^{\frac{q}{p}}.
	}
	Therefore, 
	\mm{
	 \#J_k 2^{-k(n-\beta p - \frac{p(q+1)}{q} \varepsilon)}\gtrsim_{\varepsilon} |S_k|_{n-1}^{\frac{p}{q}}.
	}
	Together with the definition of $J_k$, 
	\mm{
	\summ{j}{}r_j^{n-\beta p - \frac{p(q+1)}{q} \varepsilon} \gtrsim \summ{k}{}(\# J_k)2^{-k(n-\beta p - \frac{p(q+1)}{q} \varepsilon)} \gtrsim_{\varepsilon} \summ{k}{}|S_k|_{n-1}^{\frac{p}{q}} \gtrsim \summ{k}{} |S_k|_{n-1} \gtrsim 1.
	}
	By the definition of Hausdorff dimension, this gives the desired lower bound.
\subsection{Proof of Theorem \ref{n-sthm}} \label{babybushproof}

Before presenting the proof, we recall some simple yet useful geometric observations regarding $\delta$-tubes, as outlined in the proof of \cite[Proposition 11.8]{wlecturenotes}.

\begin{prop} \label{propgeo}
For all $e_1, e_2 \in S^{n-1}$ and all $a_1, a_2 \in \RR^n$, the following estimates hold for the diameter and measure of the intersection of two $\delta$-tubes: \begin{equation}
    \begin{split} 
    \textup{diam}\pare{\tube{e_1}{a_1} \cap \tube{e_2}{a_2}} &\lesssim \frac{\delta}{\theta(e_1, e_2) + \delta} < \frac{\delta}{\theta(e_1, e_2)}, \\
    \abso{\tube{e_1}{a_1} \cap \tube{e_2}{a_2}}_n &\lesssim \frac{\delta^n}{\theta(e_1, e_2) + \delta} < \frac{\delta^n}{\theta(e_1, e_2)}, \nonumber
     \end{split} 
     \end{equation}
     where $\theta(e_1, e_2)$ is the acute angle between $e_1$ and $e_2$.
  \end{prop}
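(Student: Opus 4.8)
The plan is to separate the ``trivial'' angular regime from the main one, and then to prove each of the two estimates by a direct geometric argument.

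\emph{Reduction.} When $\theta := \theta(e_1,e_2) \lesssim \delta$, the two right-hand sides are, up to constants, $1$ and $\delta^{n-1}$ respectively, so both inequalities follow at once from the trivial facts that a $\delta$-tube has diameter $\lesssim 1$ and $n$-dimensional volume $\lesssim \delta^{n-1}$. Hence I may assume $\delta \lesssim \theta \le \pi/2$; then $\delta/(\theta+\delta) \approx \delta/\theta$, and it is enough to bound $\textup{diam}\big(\tube{e_1}{a_1}\cap\tube{e_2}{a_2}\big)$ and $\big|\tube{e_1}{a_1}\cap\tube{e_2}{a_2}\big|_n$ by $\delta/\theta$ and $\delta^n/\theta$ up to constants; the strict inequalities $\delta/(\theta+\delta) < \delta/\theta$ and $\delta^n/(\theta+\delta) < \delta^n/\theta$ are automatic.

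\emph{Diameter.} Let $x,y$ lie in $\tube{e_1}{a_1}\cap\tube{e_2}{a_2}$ and let $\ell_i$ be the line through $a_i$ in direction $e_i$. Since $x$ and $y$ are within $\delta$ of $\ell_i$, orthogonally projecting onto $\ell_i$ shows that $x - y = \alpha_i e_i + w_i$ with $\alpha_i \in \RR$ and $|w_i| \le 2\delta$, for $i = 1,2$. Subtracting the two identities gives $|\alpha_1 e_1 - \alpha_2 e_2| \le 4\delta$. On the other hand the component of $\alpha_1 e_1 - \alpha_2 e_2$ orthogonal to $e_2$ equals $\alpha_1\big(e_1 - (e_1\cdot e_2)e_2\big)$, which has length $|\alpha_1|\sqrt{1-(e_1\cdot e_2)^2} = |\alpha_1|\sin\theta \gtrsim |\alpha_1|\theta$. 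Hence $|\alpha_1| \lesssim \delta/\theta$, so $|x-y| \le |\alpha_1| + 2\delta \lesssim \delta/\theta$; taking the supremum over $x,y$ yields the diameter bound.

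\emph{Volume.} Write $\Omega = \tube{e_1}{a_1}\cap\tube{e_2}{a_2}$ and let $\pi\colon\RR^n \to e_1^{\perp}$ be orthogonal projection, so that the fibres $\pi^{-1}(p)$ are lines in direction $e_1$. Since $\Omega \subseteq \tube{e_1}{a_1}$ and the latter projects under $\pi$ into an $(n-1)$-ball of radius $\delta$ (every point of $\tube{e_1}{a_1}$ is within $\delta$ of the segment $I_{e_1}(a_1)$, which $\pi$ collapses to a point), we get $|\pi(\Omega)|_{n-1} \lesssim \delta^{n-1}$. Fix $p \in e_1^{\perp}$ and parametrise the fibre by $t \mapsto q + t e_1$; since $\Omega\cap\pi^{-1}(p) \subseteq \tube{e_2}{a_2}$ lies inside the infinite cylinder of radius $\delta$ about $\ell_2$, and since the squared distance from $q + t e_1$ to $\ell_2$ is a quadratic polynomial in $t$ with leading coefficient $1 - (e_1\cdot e_2)^2 = \sin^2\theta$, the set of $t$ for which this distance is $\le \delta$ is an interval of length $\le 2\delta/\sin\theta \lesssim \delta/\theta$. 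Thus every fibre of $\Omega$ has length $\lesssim \delta/\theta$, and Fubini gives $|\Omega|_n = \int_{\pi(\Omega)}\big|\Omega\cap\pi^{-1}(p)\big|_1\,dp \lesssim \delta^{n-1}\cdot \delta/\theta = \delta^{n}/\theta$, as required.

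\emph{Expected difficulty.} The diameter bound is essentially formal; the one place needing genuine (if elementary) care is the volume bound, where the naive estimate $|\Omega|_n \le (\textup{diam}\,\Omega)^n \lesssim (\delta/\theta)^n$ is far too lossy. The right viewpoint is that $\Omega$ is thin --- of width $\lesssim \delta$ --- in the $n-1$ directions transverse to $e_1$, and is long --- of length $\lesssim \delta/\theta$ --- only in the single direction $e_1$; the quadratic-in-$t$ computation is exactly what quantifies that length, while the reduction to $\theta \gtrsim \delta$ together with the trivial fibre bound ``length $\le$ length of tube $\lesssim 1$'' disposes of the near-parallel and end-of-tube cases.
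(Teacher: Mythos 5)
Your proof is correct. Note, however, that the paper does not actually prove Proposition \ref{propgeo}: it is stated as a recollection and attributed to the proof of Proposition 11.8 in Wolff's lecture notes, so there is no in-paper argument to compare against. Your write-up is the standard argument and is sound in all the places that need care: the reduction to $\theta\gtrsim\delta$ handles the near-parallel case, the projection identity $x-y=\alpha_ie_i+w_i$ with $|w_i|\le 2\delta$ gives the diameter bound via the transverse component $|\alpha_1|\sin\theta\le 4\delta$, and the Fubini/fibre computation (quadratic in $t$ with leading coefficient $\sin^2\theta$) correctly yields fibre length $\le 2\delta/\sin\theta$ rather than the lossy $(\mathrm{diam})^n$ bound. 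The only simplification worth mentioning is that the volume estimate can be deduced directly from the diameter estimate: once $\mathrm{diam}(\Omega)\lesssim\delta/(\theta+\delta)=:D$, one has $\Omega\subseteq \tube{e_1}{a_1}\cap B(x_0,D)$ for any $x_0\in\Omega$, and the intersection of a $\delta$-tube with a ball of radius $D\ge\delta$ has volume $\lesssim\delta^{n-1}D$; this avoids the separate quadratic computation, though your version is equally valid and arguably more self-contained.
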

In order to prove \eqref{n-sformula}, for any measurable set $E \subseteq \RR^n$, let
	\mm{
	E_{\lambda}=\paree{e \in S^{n-1}: \cK_{\delta,A}(\chi_{E})(e)> \lambda}.
	}
Then it suffices to show
\begin{equation}
	\lambda \abso{E_{\lambda}}_{n-1}^{\frac{1}{n}} \lesssim_{\varepsilon} \delta^{-\frac{s}{n}-\varepsilon} \abso{E}_n \label{equal2.2}
\end{equation}
for all $0< \delta <1$, $0< \lambda <1$ and $\varepsilon >0$.
	Choosing a maximal $\delta$-separated subset $\paree{e_1, \dots, e_N} \subseteq E_{\lambda}$, it is easy to see 
	\begin{equation}
		N \gtrsim \frac{|E_{\lambda}|_{n-1}}{\delta^{n-1}}. \label{sec3.2_1}
	\end{equation}
	For each $e_j$, there exists a midpoint $a_i \in A$ such that for the tube $T_j=T^{\delta}_{e_j}(a_j)$,
	\begin{equation}
		|E \cap T_j| > \lambda |T_j| \approx \lambda \delta^{n-1}. \label{thm1.2_1}
	\end{equation}
	By the assumption $\ubox A\leq s$, for any $\varepsilon >0$, there exists a covering of $A$ by $N_{\delta}$ open balls $\{B_i\}_{i=1}^{N_{\delta}}$, where $B_i = B(y_i, \frac{1}{3}\delta)$ and $N_{\delta} \lesssim \delta^{-(s+\varepsilon)} $. The constant $\frac{1}{3}$  was chosen so that for any $z \in B(y_i, \frac{1}{3}\delta)$,  
\[
B(y_i, \frac{1}{3}\delta) \subseteq T^{\delta}_{e}(z).
\]  
By the pigeonhole principle, there exists a ball $B_{i_0} = B(y_{i_0}, \frac{1}{3}\delta)$ such that at least $\frac{N}{N_{\delta}}$ midpoints $a_j$ are contained in $B_{i_0}$, and the point $y_{i_0}$ is contained in at least $\frac{N}{N_{\delta}}$ tubes. Assume the tubes are labelled such that $y=y_{i_0}$ belongs to the first $M_{\delta}$ tubes, i.e., $y \in T_j$ for $j=1, \dots, M_{\delta}$, where $M_{\delta}$ is the greatest integer less than $\frac{N}{N_\delta}$, see Figure \ref{onebush}.
	\begin{figure}[h]
		\centering
		\includegraphics[scale=0.55]{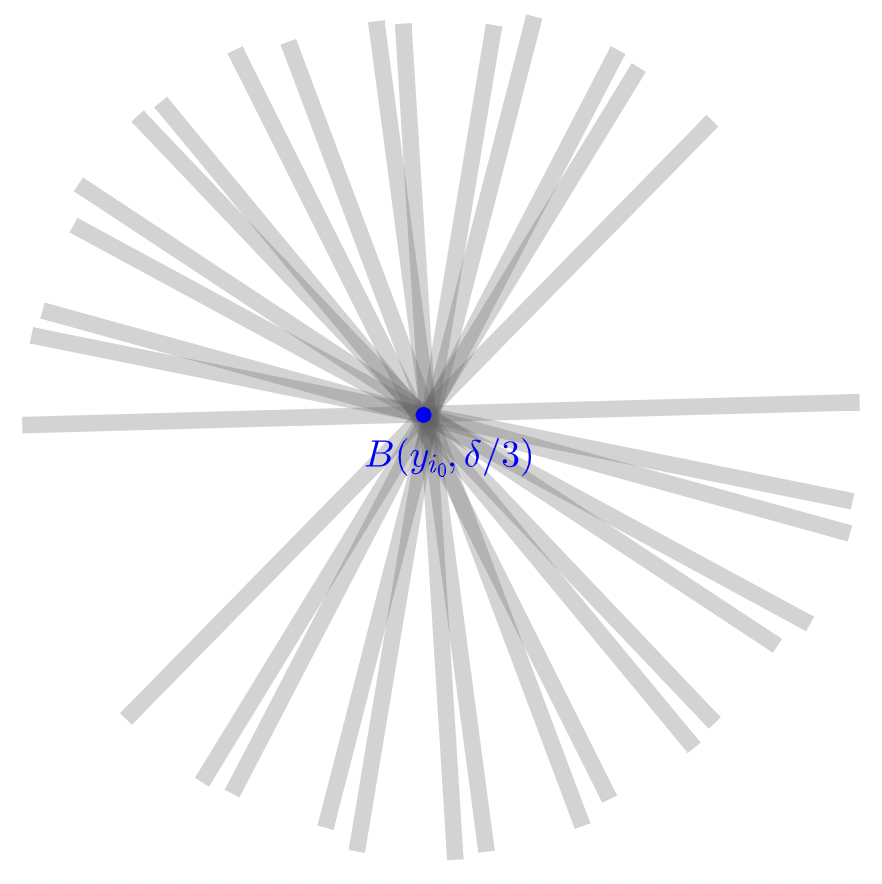} 
		\caption{$M_{\delta}$ tubes whose centres are in the ball $B(y_{i_0},\frac{1}{3}\delta)$.}
		\label{onebush}
	\end{figure}

	By geometric considerations, there exists a constant $c$ depending only on $n$ such that
	\mm{
	\abso{B(y,c \lambda) \cap T^{\delta}_e (a) } \leq \frac{\lambda}{2} |T^{\delta}_e(a)|
	}
	for any $e \in S^{n-1}$ and $a \in \mathbb{R}^{n}$. From $\eqref{thm1.2_1}$ for $j=1, \dots, M_{\delta}$, 
	\mm{
	\abso{E \cap T_j \backslash B(y,c \lambda)} \geq \frac{\lambda}{2}|T_j| \approx \lambda \delta^{n-1}.
	}
	By Proposition \ref{propgeo}, there exists another constant $b \geq c$ such that for all $e,e' \in S^{n-1}$ and $a, a' \in \mathbb{R}^n$, 
	\begin{equation}
		\textup{diam}(T^{\delta}_{e}(a) \cap T^{\delta}_{e'}(a')) \leq \frac{b \delta}{|e-e'|}. \label{thm1.2_2}
	\end{equation}
	Let $\{ e_1', \dots e_{m'}'\} $ be a maximal $\frac{b \delta }{c \lambda}$-separated subset of $\{ e_1, \dots e_{M_{\delta}}\}$. Since $\frac{b \delta}{c \lambda }> \delta$, the balls $B(e_k', \frac{2b\delta}{c \lambda})$, $k=1,\dots, m'$, cover the disjoint balls $B(e_j, \frac{\delta}{3})$ for $j=1, \dots ,M_{\delta}$.
	Thus, 
	\mm{
	\frac{N}{N_{\delta}} \delta^{n-1} \approx \abso{\bigcupp{j=1}{M_{\delta}}B(e_j, \frac{\delta}{3})}_{n-1} \leq \abso{\bigcupp{k=1}{m'}B(e_k',\frac{2b\delta}{c \lambda})}_{n-1} \lesssim m' (\frac{\delta}{\lambda})^{n-1},
	}
which implies  
\[
m' \gtrsim \lambda^{n-1} \frac{N}{N_{\delta}}.
\]  
	It follows from $\eqref{thm1.2_2}$ that the sets $\paree{E \cap T'_k \backslash B(y,c \lambda)}_{k=1}^{m'}$ are disjoint because	
	\mm{
	\textup{diam}(T_k' \cap T_s') \leq \frac{b \delta}{|e_k'-e_s'|} \leq \frac{b\delta }{b \delta / (c \lambda)} = c \lambda
	}
	for any $k \neq s$ in $1, \dots,m'$.
	Therefore, 
	\begin{equation}
		|E|_n \gtrsim \lambda \delta^{n-1}m' \gtrsim \lambda \delta^{n-1} \lambda^{n-1} \frac{N}{N_{\delta}} \gtrsim N \lambda^n \delta^{n+s+\varepsilon -1}. \nonumber
	\end{equation}
	Together with \eqref{sec3.2_1}, we obtain \eqref{equal2.2}.

\subsection{Proof of Theorem \ref{mainresult}} \label{bushproof}
By checking a simple example for $f$ in \eqref{result_n-1}, we obtain a simple relationship between $p_{n-1}$ and $h_{n-1}$.
\begin{prop}
	Suppose for some $h_{n-1} >0$ and $p_{n-1} >1$, 
	\begin{equation}
		\nnn{(f)^*_{\delta}}{L^{p_{n-1}}(S^{n-2})}{} \lesssim_{\varepsilon}  \delta^{-h_{n-1}-\varepsilon}\nnn{f}{L^{p_{n-1}}(\mathbb{R}^{n-1})}{}\nonumber 
	\end{equation}
	holds for all $f \in L^1_{loc}(\RR^{n-1})$ and $0 < \delta <1$ and some  $\varepsilon >0$. Then 
	\begin{equation}
		n-1 \leq (1+h_{n-1})p_{n-1}+ \varepsilon p_{n-1}. \label{handp}
	\end{equation}
\end{prop}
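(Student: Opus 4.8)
The plan is to substitute a single well-chosen test function into the hypothesised inequality \eqref{result_n-1} and to read off the resulting constraint on the exponents. The natural candidate is the indicator of a small ball. In $\mathbb{R}^{n-1}$ set $f = \chi_{B(0,\delta)}$, where $B(0,\delta)$ is the ball of radius $\delta$ centred at the origin. The right-hand side of \eqref{result_n-1} is then immediate, since
\[
\nnn{f}{L^{p_{n-1}}(\mathbb{R}^{n-1})}{} = \abso{B(0,\delta)}^{1/p_{n-1}} \approx \delta^{(n-1)/p_{n-1}},
\]
where $\abso{\cdot}$ denotes Lebesgue measure on $\mathbb{R}^{n-1}$.

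For the left-hand side I would bound the maximal function from below \emph{uniformly} in the direction. Given any $e \in S^{n-2}$, take the unit segment $I_e(0)$ through the origin, so that its $\delta$-neighbourhood $\tube{e}{0}$ (a $\delta$-tube in $\mathbb{R}^{n-1}$) contains $B(0,\delta)$. Since $\abso{\tube{e}{0}} \approx \delta^{n-2}$, we get
\[
(f)^*_{\delta}(e) \;\ge\; \frac{1}{\abso{\tube{e}{0}}}\intt{\tube{e}{0}}{}\abso{f} \;\ge\; \frac{\abso{B(0,\delta)}}{\abso{\tube{e}{0}}} \;\approx\; \frac{\delta^{n-1}}{\delta^{n-2}} \;=\; \delta,
\]
and, this being valid for every $e \in S^{n-2}$, integrating over the sphere gives
$\nnn{(f)^*_{\delta}}{L^{p_{n-1}}(S^{n-2})}{} \gtrsim \delta$.

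Feeding both bounds into \eqref{result_n-1} yields $\delta \lesssim_{\varepsilon} \delta^{-h_{n-1}-\varepsilon}\,\delta^{(n-1)/p_{n-1}}$, i.e.
\[
\delta^{\,1 - \frac{n-1}{p_{n-1}} + h_{n-1} + \varepsilon} \lesssim_{\varepsilon} 1 \qquad \text{for all } 0 < \delta < 1 .
\]
If the exponent were negative the left-hand side would blow up as $\delta \to 0$, a contradiction; hence $1 - \frac{n-1}{p_{n-1}} + h_{n-1} + \varepsilon \ge 0$, which rearranges to $n-1 \le (1+h_{n-1})p_{n-1} + \varepsilon p_{n-1}$, as claimed.

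There is no serious obstacle here; this is a routine test-function computation. The only point requiring a little care is the geometric verification that, for \emph{each} direction $e$, one can choose a $\delta$-tube containing the fixed small ball, so that the lower bound $(f)^*_{\delta}(e) \gtrsim \delta$ is genuinely uniform in $e$ — this uniformity is precisely what makes $\nnn{(f)^*_\delta}{L^{p_{n-1}}(S^{n-2})}{}$ of order $\delta$ rather than something smaller.
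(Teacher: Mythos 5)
Your proposal is correct and follows essentially the same route as the paper: both take $f = \chi_{B(0,\delta)}$, note that for every $e \in S^{n-2}$ the tube $\tube{e}{0}$ contains $B(0,\delta)$ so that $(f)^*_\delta(e) \gtrsim \delta$ uniformly, and compare exponents as $\delta \to 0$. No gaps.
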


\begin{proof}
	Let $f=\chi_{B(0, \delta)}$. We first know that 
	\[
	\nnn{f}{L^{p_{n-1}}(\mathbb{R}^{n-1})}{}=(\delta^{n-1})^{1/p_{n-1}}.
	\]
	Since for all $e \in S^{n-2}$, the $\delta$-tube $T^{\delta}_e(0)$ contains the ball $B(0, \delta)$, the Kakeya maximal function satisfies
	\[
	(f)^*_{\delta}(e) \approx \frac{\delta^{n-1}}{\delta^{n-2}}=\delta.
	\]
	Then $p_{n-1}$ and $h_{n-1}$ satisfy
	\[
	\delta \lesssim_{\varepsilon} \delta^{-h_{n-1}-\varepsilon+\frac{n-1}{p_{n-1}}}
	\]
	for all $0 < \delta <1$ and $\varepsilon>0$. Therefore, the exponents satisfy
	\[
	1 \geq -h_{n-1}-\varepsilon+\frac{n-1}{p_{n-1}},
	\]
	which, upon rearranging, gives  \eqref{handp}.
\end{proof}

Note that we can rewrite the conclusion of Theorem \ref{mainresult} as  
\begin{equation}
	\lambda \abso{\paree{e \in S^{n-1}: \cK_{\delta,A}(\chi_{E})(e)> \lambda}}^{\frac{1}{p}}_{n-1} \lesssim_{\varepsilon} \delta^{-\beta-\varepsilon}\abso{E}^{\frac{1}{p}}_n \label{mainresultformula_2}
\end{equation}
for all measurable sets $E \subseteq \RR^n$, $0< \delta<1$, $0< \lambda<1$, $\varepsilon >0$ and all $A \subseteq \RR^n$ with $\ubd A \leq s$. We split the proof into two cases. Let 
\[
E_{\lambda}=\paree{e \in S^{n-1}: \cK_{\delta,A}(\chi_{E})(e)> \lambda}.
\]
We assume that $E_{\lambda}$ is nonempty.  

\textbf{Case 1}: when $\lambda \leq \delta$.  
For any $\xi \in E_{\lambda}$, there exists a $\delta$-tube $\tube{\xi}{a_{\xi}}$ with $a_{\xi} \in A$ such that 
\begin{equation}
	\abso{E \cap \tube{\xi}{a_{\xi}}}_{n} \gtrsim \lambda \delta^{n-1}. \nonumber
\end{equation}
Therefore, 
\[
\abso{E}_n \geq \abso{E \cap \tube{\xi}{a_{\xi}}}_{n} \gtrsim \lambda \delta^{n-1}.
\]
Comparing with \eqref{mainresultformula_2}, it suffices to show 
\[
\lambda^{p-1}\delta^{\beta p + \varepsilon p -(n-1)} \abso{E_{\lambda}}_{n-1} \lesssim_{\varepsilon}1.
\]
Since the set $E_{\lambda} \subseteq S^{n-1}$ has finite measure and $\lambda \leq \delta$, it suffices to show 
\begin{equation}
	\delta^{p-1+\beta p + \varepsilon p -(n-1) } \lesssim_{\varepsilon}1. \label{beforeexp}
\end{equation}
Recalling 
\[
p=\frac{p_{n-1}+n(p_{n-1}-1)+1}{p_{n-1}}, \quad \beta=\frac{h_{n-1}p_{n-1}+sp_{n-1}-s}{p_{n-1}+n(p_{n-1}-1)+1}
\]
and substituting $p$ and $\beta$ into \eqref{beforeexp}, we finally need to prove the following inequality for the exponent in \eqref{beforeexp}:
\begin{equation}
	\frac{(1+h_{n-1})p_{n-1}-(n-1)+sp_{n-1}-s}{p_{n-1}}+ \varepsilon p \geq0. \nonumber
\end{equation}
By relation \eqref{handp}, this inequality holds.

\textbf{Case 2}: when $\lambda > \delta$.
We first define the notion of a bush.  
\begin{definition}
    Suppose $\mathcal{T}$ is a finite collection of $\delta$-tubes $\paree{T_1, \dots , T_N}$. If there exists a point $x$ contained in every tube $T_i \in \mathcal{T}$ for $i=1, \dots, N$, we call the union  
    \[
    \BB=\bigcup_{i=1}^{N} T_i
    \]  
    a \textit{bush}.  
\end{definition}  
For example, Figure \ref{onebush} illustrates a bush consisting of $M_{\delta}$ tubes, where the point $y_{i_0}$ is contained in each tube.\\
Let $E = E_0$ be a measurable subset of $\mathbb{R}^n$, and define  
\[
D_0 = \{ e \in S^{n-1} : \McK{\chi_E}(e) > \lambda \}.
\]  
We can assume $D_0$ and $E_0$ are both non-empty, and the measure of $D_0$ is $$|D_0|_{n-1} =: \epsilon_0.$$  
Let $\EE_0$ be a $\frac{10 \delta}{\lambda}$-separated subset of $D_0$ with cardinality  
\[
\# \EE_0 \gtrsim \epsilon_0 \left( \frac{\lambda}{\delta} \right)^{n-1}.
\]  
For each $\xi \in \EE_0$, there exists a point $a_\xi \in A$ such that we can find a tube $\tube{\xi}{a_{\xi}}$ satisfying  
\begin{equation}
\abso{E \cap \tube{\xi}{a_\xi}} \gtrsim \lambda \delta^{n-1}. \label{1111111}
\end{equation} 
By the assumption $\ubox A \leq s$, we know that for the same $\varepsilon>0$ in \eqref{mainresultformula_2}, there exists an $N_\delta \lesssim \delta^{-s-\varepsilon}$ such that $A$ can be covered using at most $N_\delta$ balls of radius $\delta$.  Since there are at least $\epsilon_0 \left(\frac{\lambda}{\delta} \right)^{n-1}$ tubes whose centres are in $A$, the pigeonhole principle implies that at least one of these $\delta$-balls must contain at least  
\[
\frac{\epsilon_0 \left(\frac{\lambda}{\delta} \right)^{n-1}}{\delta^{-s-\varepsilon}} = \epsilon_0 \lambda^{n-1} \delta^{s+\varepsilon -(n-1)}
\]  
tube centres.  

Thus, we can choose the directions of these tubes to form a set $\FF_0 \subseteq S^{n-1}$ with  
\begin{equation}
\# \FF_0 \gtrsim \epsilon_0 \lambda^{n-1} \delta^{s+\varepsilon -(n-1)}, \label{2222222}
\end{equation}
and construct a bush $\BB_0$ using these directions:  
\[
\BB_0 = \bigcup_{\xi \in \FF_0} \tube{\xi}{a_\xi}.
\]  
By the construction of the bush $\BB_0$, there exists a point $x_0$ contained in each tube of $\BB_0$.  

Using the assumption $\lambda > \delta$, for any tube $\tube{\xi}{a_\xi}$ containing $x_0$, we have  
\[
\abso{\tube{\xi}{a_\xi} \cap B(x_0, \frac{\lambda}{3})}_n \lesssim \frac{2\lambda}{3} \delta^{n-1}.
\]
Using this and \eqref{1111111},  
\begin{equation}
	\abso{E \cap \pare{\tube{\xi}{a_\xi} \setminus B(x_0, \frac{\lambda}{3})}}_n \gtrsim \frac{\lambda}{3} \delta^{n-1}. \label{first}
\end{equation}

According to Proposition 2.1, for any two distinct elements $\xi_1, \xi_2 \in \FF_0$, we have the following estimate for the diameter of the intersection of the tubes:
\[
\text{diam} \pare{\tube{\xi_1}{a_{\xi_1}} \cap \tube{\xi_2}{a_{\xi_2}}} \lesssim \frac{\delta}{\theta(\xi_1,\xi_2)} < \frac{\delta}{10 \delta / \lambda} = \frac{\lambda}{10}.
\]
Since $x_0$ is contained in both $\tube{\xi_1}{a_{\xi_1}}$ and $\tube{\xi_2}{a_{\xi_2}}$, it follows that  
\[
\tube{\xi_1}{a_{\xi_1}} \cap \tube{\xi_2}{a_{\xi_2}} \subseteq B(x_0, \frac{\lambda}{3}).
\]
This shows that the sets  
\[
\left\{ E \cap \pare{\tube{\xi}{a_\xi} \setminus B(x_0, \frac{\lambda}{3})} \right\}_{\xi \in \FF_0}
\]  
are disjoint. Summing \eqref{first} over all $\xi \in \FF_0$, we obtain  
\begin{align}
\abso{E \cap \BB_0}_n & \geq \abso{E \cap (\BB_0 \setminus B(x_0, \frac{\lambda}{3})) } _n\nonumber\\ 
& = \sum_{\xi \in \FF_0} \abso{E \cap (\tube{\xi}{a_{\xi}} \setminus B(x_0, \frac{\lambda}{3}))}_n \nonumber\\
& \gtrsim  \# \FF_0 \cdot \frac{\lambda}{3} \delta^{n-1} \nonumber\\
& \gtrsim \frac{\lambda}{3} |\BB_0|_n. \nonumber
\end{align}
We can express the above estimate as  
\begin{equation}
	\frac{1}{\lambda} \abso{E \cap \BB_0}_n \gtrsim |\BB_0|_n. \label{bush_0}
\end{equation}

The construction begins with a subset $E = E_0 \subseteq \RR^n$, from which we obtain a bush $\BB_0$. The next step is to apply the same construction to the remaining set $E_1 = E \setminus \BB_0$, provided that the level set  
\[
D_1 = \left\{ e \in S^{n-1} : \McK{\chi_{E_1}}(e) > \frac{\lambda}{2} \right\}
\]  
satisfies $\abso{D_1}_{n-1} \geq \frac{1}{4} \epsilon_0$. 

Similarly, we choose a $\frac{10 \delta}{\lambda}$-separated subset $\EE_1$ of $D_1$. Since $\abso{D_1}_{n-1} \geq \frac{1}{4} \epsilon_0$, we can again select at least $\epsilon_0 (\frac{\lambda}{\delta})^{n-1}$ tubes with directions in $\EE_1$ and centres in $A$. 

By the pigeonhole principle and the assumption $\ubox A \leq s$, we observe that at least $\epsilon_0 \lambda^{n-1} \delta^{s+\varepsilon -(n-1)}$ of these tubes have centres within the same $\delta$-ball. We then collect the directions of these tubes into the set $\FF_1$, forming a new bush $\BB_1$ that satisfies  
\begin{equation}
	\frac{1}{\lambda} \abso{E_1 \cap \BB_1}_n \gtrsim |\BB_1|_n. \label{bush_1}
\end{equation}

We continue this process by defining 
\begin{equation}
	D_i = \left\{ e \in S^{n-1} : \McK{\chi_{E_i}}(e) > \frac{\lambda}{2} \right\}, \label{defdi}
\end{equation} 
and constructing bushes $\BB_{i}$ iteratively, until the level set at the next step, $D_m$, satisfies the stopping condition  
\begin{equation}
	\abso{D_m}_{n-1} < \frac{1}{4} \epsilon_0. \label{stopcondition}
\end{equation}

The construction at each step and the stopping condition are illustrated in Figure \ref{googgraph}.

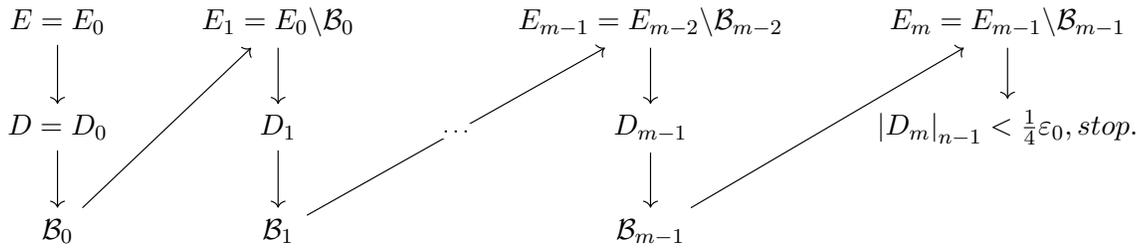
\begin{figure*}[h]

\centering

	\begin{tikzcd}[scale=2.]
E=E_0 \arrow[d]   & E_1=E_0 \backslash \BB_0 \arrow[d]  &                & E_{m-1}=E_{m-2} \backslash \BB_{m-2} \arrow[d]   & E_m=E_{m-1} \backslash \BB_{m-1} \arrow[d] \\
D=D_0 \arrow[d]   & D_1 \arrow[d]  &  & D_{m-1} \arrow[d]   & \abso{D_m}_{n-1} < \frac{1}{4} \epsilon_0, stop.        \\
\BB_0 \arrow[ruu] & \BB_1 \arrow[rruu, " \ . \, . \, .\ " description] &                & \BB_{m-1} \arrow[ruu] &     
\end{tikzcd}
\caption{Construction of the bushes $\BB_i$.}
\label{googgraph}
\end{figure*}

The next step is to show that this construction must terminate after a finite number of steps.

\begin{prop} \label{stoplemma}
	The construction process terminates before step $m$, where  
	\begin{equation}
		m \leq \frac{1}{\epsilon_0} \abso{E}_n \delta^{-s-\varepsilon} \lambda^{-n}.\label{stopformula}
	\end{equation}
\end{prop}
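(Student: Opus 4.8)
The plan is to show that each bush $\BB_i$ extracts a definite proportion — roughly $\lambda^{n+1}\delta^{s+\varepsilon'-1}$ times $\epsilon_0$ — of the volume of $E_i$, and since the sets $E_i$ are nested and decreasing, the total volume $|E|_n$ bounds the number of steps. First, I would recall the two key facts established in the construction: each bush $\BB_i$ consists of $\#\FF_i \gtrsim \epsilon_0 \lambda^{n-1}\delta^{s+\varepsilon'-(n-1)}$ tubes (from \eqref{2222222}, applied to $E_i$), and it satisfies the ``density'' estimate $\tfrac{1}{\lambda}|E_i \cap \BB_i|_n \gtrsim |\BB_i|_n$ analogous to \eqref{bush_0}. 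Since the $\#\FF_i$ tubes forming $\BB_i$ all pass through a common point but are $\tfrac{10\delta}{\lambda}$-separated in direction, the bush has volume $|\BB_i|_n \gtrsim \#\FF_i \cdot \delta^{n-1}$ — here one must check the tubes overlap only near the common point, which is exactly the disjointness argument already carried out for the sets $E_i \cap (\tube{\xi}{a_\xi}\setminus B(x_0,\lambda/3))$ in the paragraph proving \eqref{first}.

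Combining these, for each $i$ before termination,
\begin{equation}
|E_i \cap \BB_i|_n \gtrsim \lambda |\BB_i|_n \gtrsim \lambda \cdot \#\FF_i \cdot \delta^{n-1} \gtrsim \lambda \cdot \epsilon_0 \lambda^{n-1}\delta^{s+\varepsilon'-(n-1)} \cdot \delta^{n-1} = \epsilon_0 \lambda^{n} \delta^{s+\varepsilon'}. \nonumber
\end{equation}
Next, since $E_{i+1} = E_i \setminus \BB_i$, the amount removed at step $i$ is at least $|E_i \cap \BB_i|_n \gtrsim \epsilon_0 \lambda^n \delta^{s+\varepsilon'}$, and these removed pieces are pairwise disjoint subsets of $E$. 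Hence if the construction runs for $m$ steps,
\begin{equation}
|E|_n \geq \sum_{i=0}^{m-1} |E_i \cap \BB_i|_n \gtrsim m \cdot \epsilon_0 \lambda^n \delta^{s+\varepsilon'}, \nonumber
\end{equation}
which rearranges to $m \lesssim \tfrac{1}{\epsilon_0}|E|_n \delta^{-s-\varepsilon'}\lambda^{-n}$, as claimed. (The implicit constant, depending only on $n$, can be absorbed or tracked; the displayed inequality \eqref{stopformula} should be read with such a constant, or one notes the lower bounds are uniform in $i$ so the geometric-type bookkeeping is immediate.)

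The main point requiring care — and the step I expect to be the real obstacle — is verifying that the lower bound $|E_i \cap \BB_i|_n \gtrsim \epsilon_0 \lambda^n \delta^{s+\varepsilon'}$ holds \emph{uniformly} for every $i < m$, rather than just for $i = 0$. This hinges on the stopping condition: as long as $|D_i|_{n-1} \geq \tfrac14 \epsilon_0$, one can run exactly the same pigeonholing that produced \eqref{2222222} with $\epsilon_0$ replaced by $\tfrac14\epsilon_0$, so $\#\FF_i \gtrsim \epsilon_0 \lambda^{n-1}\delta^{s+\varepsilon'-(n-1)}$ with a uniform constant, and then the bush density estimate \eqref{bush_1}-type inequality (which only used $\lambda > \delta$ and Proposition \ref{propgeo}, both independent of $i$) goes through verbatim. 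Once uniformity is secured, the disjointness of the removed pieces and the telescoping sum finish the argument.
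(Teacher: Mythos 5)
Your proposal is correct and follows essentially the same route as the paper: both use the disjointness of the sets $E_i \cap \BB_i$, the density estimate $\tfrac{1}{\lambda}\abso{E_i \cap \BB_i}_n \gtrsim \abso{\BB_i}_n$ from \eqref{bush_0}--\eqref{bush_1}, and the volume bound $\abso{\BB_i}_n \gtrsim \#\FF_i\,\delta^{n-1} \gtrsim \epsilon_0\lambda^{n-1}\delta^{s+\varepsilon'}$ from \eqref{2222222}, then sum over $i$ and rearrange. Your explicit remark on the uniformity in $i$ (via the stopping condition $\abso{D_i}_{n-1}\geq \tfrac14\epsilon_0$) is a point the paper treats only implicitly, but it is the same argument.
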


\begin{proof}
	By construction, each set is defined recursively as $E_{i+1} = E_i \setminus \BB_i$, and the sets $\{E_i \cap \BB_i\}_{i=0}^{m-1}$ are disjoint. From the same reasoning as in \eqref{bush_0} or \eqref{bush_1}, we obtain  
	\begin{equation}
		\frac{\abso{E}_n}{\lambda} \geq \sum_{i=0}^{m-1} \frac{\abso{E_i \cap \BB_i}_n}{\lambda} \gtrsim \sum_{i=0}^{m-1} \abso{\BB_i}_n. \label{bush_3}
	\end{equation}

	Each bush $\BB_i$ consists of $\# \FF_i$ many $\delta$-tubes, whose directions are $\frac{10 \delta}{\lambda}$-separated. From basic geometric considerations and \eqref{2222222}, we estimate  
	\begin{equation}
		\abso{\BB_i}_n \approx \# \FF_i \delta^{n-1} \gtrsim \epsilon_0 \lambda^{n-1} \delta^{s+\varepsilon}.\nonumber
	\end{equation}
	Substituting this into \eqref{bush_3}, we obtain  
	\begin{equation}
		\frac{\abso{E}_n}{\lambda} \gtrsim \sum_{i=0}^{m-1} \abso{\BB_i}_n \gtrsim m \epsilon_0 \lambda^{n-1} \delta^{s+\varepsilon}.\label{333333}
	\end{equation}
	Rearranging gives the desired bound \eqref{stopformula}.  
\end{proof}

The final step of the above construction yields
\mm{E_m = E \cap \pare{\bigcup_{i=0}^{m-1} B_i}^c,}
where $|D_m|_{n-1} < \frac{1}{4} \epsilon_0$. Define 
\mm{\overline{E} = E \setminus E_m = E \cap \pare{\bigcup_{i=0}^{m-1} B_i},}
and let
\begin{equation}
	\overline{D} = \{ e \in S^{n-1} : \McK{\chi_{\overline{E}}}(e) > \frac{\lambda}{2} \}. \label{dbardef}
\end{equation}
We will show that 
\mm{D_0 \subseteq D_m \cup \overline{D}.}

Indeed, for any $e \in D_0$, there exists a $\delta$-tube $\tube{e}{a_e}$ centred at some $a_e \in A$ such that 
\mm{\frac{1}{\abso{\tube{e}{a_e}}_n} \abso{E \cap \tube{e}{a_e}}_n > \lambda.}
Since $E_m$ and $\overline{E}$ form a partition of $E$, we obtain
\mm{\frac{1}{\abso{\tube{e}{a_e}}_n} \abso{E_m \cap \tube{e}{a_e}}_n + \frac{1}{\abso{\tube{e}{a_e}}_n} \abso{\overline{E} \cap \tube{e}{a_e}}_n > \lambda.}
Thus, at least one of the terms on the left-hand side must be greater than $\frac{\lambda}{2}$, implying $e \in D_m \cup \overline{D}$ from the definition \eqref{defdi} and \eqref{dbardef}.   Since  
\mm{\abso{D_m}_{n-1} + \abso{\overline{D}}_{n-1} \geq \abso{D_0}_{n-1} = \epsilon_0}
and the stopping condition ensures
\mm{\abso{D_m}_{n-1} < \frac{1}{4} \epsilon_0,}
it follows that
\begin{equation}
	\abso{\overline{D}}_{n-1} \geq \frac{1}{4} \epsilon_0. \label{measuredbar}
\end{equation}
For any $\xi \in \overline{D}$, there exists a $\delta$-tube $\tube{\xi}{a_\xi}$ centred at $a_\xi \in A$ such that  
\mm{\frac{1}{\abso{\tube{\xi}{a_\xi}}_n} \abso{\bigcup_{i=0}^{m-1} (E \cap \BB_i) \cap \tube{\xi}{a_\xi}}_n > \frac{\lambda}{2}.}
This implies
\begin{equation}
\begin{split}
	\frac{\summ{i=0}{m-1}\abso{\BB_i \cap \tube{\xi}{a_{\xi}}}_n}{\abso{\tube{\xi}{a_{\xi}}}_n} & \geq \frac{\summ{i=0}{m-1}\abso{(E \cap \BB_i) \cap \tube{\xi}{a_{\xi}}}_n}{\abso{\tube{\xi}{a_{\xi}}}_n}\\
	& \geq \frac{\abso{\bigcup_{i=0}^{m-1} (E \cap \BB_i) \cap \tube{\xi}{a_\xi}}_n}{\abso{\tube{\xi}{a_{\xi}}}_n}
	> \frac{\lambda}{2} \label{beforebourgainlemma}
\end{split}
\end{equation}

The final part of the proof introduces the maximal function associated with neighbourhoods of parallelograms.

\begin{defn}
	For any $e \in S^{n-1}$ and any two distinct points $x_1, x_2 \in \RR^n$, let $P_{e}(x_1, x_2)$ be a parallelogram formed by two parallel edges $I_e(x_1)$ and $I_e(x_2)$. Let $\TT_{e}(x_1, x_2)$ denote the $\delta$-neighbourhood of $P_e(x_1, x_2)$. We define the maximal function over all such neighbourhoods of parallelograms as
	\begin{equation}
		\cM_{\delta}(f)(e) = \sup_{x_1, x_2} \frac{1}{\abso{\TT_e(x_1, x_2)}_n} \int_{\TT_e(x_1, x_2)} f(x) \,dx.\nonumber
	\end{equation}
\end{defn}

We now state a lemma from \cite{B91}. In fact, only the case $n=3$ is explicit in  \cite[Lemma 1.52]{B91} but the extension to $\RR^n$  is implicit (and used) in \cite[Page 158, (2.8)]{B91}.

\begin{lma} \label{bourgainlemma}
	Suppose that for all $\varepsilon>0$,
	\begin{equation}
		\nnn{(f)^*_{\delta}}{L^{p_{n-1}}(S^{n-2})}{} \lesssim_{\varepsilon}  \delta^{-h_{n-1}-\varepsilon}\nnn{f}{L^{p_{n-1}}(\mathbb{R}^{n-1})}{} \nonumber
	\end{equation}
	holds for some $h_{n-1} > 0$ and $p_{n-1} \geq 1$. Then for all $\varepsilon > 0$, the maximal function over $\delta$-neighbourhoods of parallelograms in $\mathbb{R}^n$ satisfies  
	\begin{equation}
		\nn{\cM_\delta (f)}_{L^{p_{n-1}}(S^{n-1})} \lesssim_{\varepsilon} r^{-\frac{1}{p_{n-1}}} \delta^{-h_{n-1}-\varepsilon} \nn{f}_{L^{p_{n-1}}(\mathbb{R}^{n})} \label{bourgainlemmaformula}
	\end{equation}
	for all function $ f \in L^1_{\text{loc}}(\mathbb{R}^n) $ supported in $ B(0,2r) \setminus B(0,r) $.
\end{lma}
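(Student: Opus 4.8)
The plan is to deduce the $n$-dimensional parallelogram estimate \eqref{bourgainlemmaformula} from the $(n-1)$-dimensional Kakeya maximal estimate \eqref{result_n-1} by slicing $\mathbb{R}^n$ into hyperplanes, on each of which a $\delta$-neighbourhood of a parallelogram cuts down to a $\delta$-tube; the annular support of $f$ is what produces the gain $r^{-1/p_{n-1}}$. Concretely, I would slice using the family $\{\omega^\perp\}_{\omega\in S^{n-1}}$ of hyperplanes through the origin, for which the coarea identity $\int_{S^{n-1}}\int_{\omega^\perp}g\,d\mathcal{H}^{n-1}\,d\omega = c_n\int_{\mathbb{R}^n}|x|^{-1}g(x)\,dx$ holds; for $g$ supported in $B(0,2r)\setminus B(0,r)$ the right side is comparable to $r^{-1}\int g$, and this is the only place the support hypothesis enters.

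\textbf{Slicing geometry.} Fix $e\in S^{n-1}$ and a parallelogram $P_e(x_1,x_2)$, lying in the $2$-plane $\Pi$ spanned by $e$ and $x_2-x_1$. For all $\omega$ outside an $\mathcal{H}^{n-1}$-null set the intersection $\Pi\cap\omega^\perp$ is a line, so $\TT_e(x_1,x_2)\cap\omega^\perp$ sits inside the $C_n\delta$-neighbourhood, taken within $\omega^\perp\cong\mathbb{R}^{n-1}$, of a segment --- a $\delta$-tube $\tau_\omega$ of direction $\bar e_\omega\in S^{n-2}$ and length $\le 2+|x_1-x_2|$ (degenerating to a $\delta$-ball when this segment is shorter than $\delta$). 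For generic $\omega$ the length of $\tau_\omega$ is $\approx\min(1,|x_1-x_2|)$, so the weights $w(\omega):=|\tau_\omega|_{n-1}$ are essentially constant on a set of nearly full measure.

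\textbf{Pointwise bound and integration.} Applying the coarea identity to $\chi_{\TT_e(x_1,x_2)}|f|$ and to $\chi_{\TT_e(x_1,x_2)}$ --- after the routine reduction to \emph{efficient} maximising parallelograms (enlarging $\TT_e(x_1,x_2)$ past $\operatorname{supp}f$ only lowers the average) --- one writes both $\int_{\TT_e(x_1,x_2)}|f|$ and $|\TT_e(x_1,x_2)|_n$ as $\omega$-averages of slice quantities; bounding each slice integral by $|\tau_\omega|_{n-1}(|f|_\omega)^*_\delta(\bar e_\omega)$, where $|f|_\omega$ is the restriction of $|f|$ to $\omega^\perp$ (legitimate for $\delta$-tubes of any length and for $\delta$-balls by splitting into unit sub-tubes), and taking the supremum over $x_1,x_2$ gives $\cM_\delta(f)(e)\lesssim \sup \Big( \int w(\omega)(|f|_\omega)^*_\delta(\psi_e(\omega))\,d\omega \Big) / \int w(\omega)\,d\omega$ for suitable $\psi_e(\omega)\in S^{n-2}\subseteq\omega^\perp$. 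Raising to the power $p_{n-1}$, applying Jensen in the probability measure $w\,d\omega/\!\int w$, integrating over $e\in S^{n-1}$, discretising the $e$-caps, and exchanging the order of integration reduces, on each fixed hyperplane $\omega^\perp$, to $\int_{S^{n-2}}(|f|_\omega)^*_\delta(\cdot)^{p_{n-1}}$, which \eqref{result_n-1} bounds by $C\delta^{-p_{n-1}h_{n-1}-\varepsilon p_{n-1}}\||f|_\omega\|_{L^{p_{n-1}}(\mathbb{R}^{n-1})}^{p_{n-1}}$. One last use of the coarea identity, $\int_{S^{n-1}}\||f|_\omega\|_{L^{p_{n-1}}(\mathbb{R}^{n-1})}^{p_{n-1}}\,d\omega=c_n\int|x|^{-1}|f(x)|^{p_{n-1}}\,dx\approx r^{-1}\|f\|_{L^{p_{n-1}}(\mathbb{R}^n)}^{p_{n-1}}$, produces \eqref{bourgainlemmaformula}; a dimension count confirms the power of $\delta$ is unchanged, since discretising the $e$-caps costs $\delta^{n-1}$ while the map $e\mapsto\psi_e(\omega)$ onto $S^{n-2}$ has multiplicity $\approx\delta^{-(n-1)}$.

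\textbf{Main obstacle.} The delicate part is making the ``$\cM_\delta(f)(e)=$ weighted $\omega$-average of slice averages'' identification rigorous and uniform over the supremum: one must control the near-degenerate parallelograms ($x_1,x_2$ close, or $x_2-x_1$ nearly parallel to $e$, or $\Pi$ contained in an atypically large family of slicing hyperplanes), ensure the weights $w(\omega)$ do not concentrate on an $\mathcal{H}^{n-1}$-null set, and justify the reduction to efficient parallelograms so that $|\TT_e(x_1,x_2)|_n$ is correctly captured by $\int w\,d\omega$. This bookkeeping is exactly what is done in \cite[Lemma~1.52]{B91} (and used in general dimension at \cite[p.~158, (2.8)]{B91}), and it is the step that genuinely needs the \emph{unrestricted} estimate \eqref{result_n-1} on the slices.
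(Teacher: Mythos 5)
The first thing to say is that the paper does not prove Lemma \ref{bourgainlemma} at all: it is imported from Bourgain, with the remark that only the case $n=3$ is explicit in \cite[Lemma 1.52]{B91} and that the $n$-dimensional version is implicit in \cite[p.~158, (2.8)]{B91}. So there is no in-paper argument to compare yours against, only the authors' one-line gloss that the estimate ``comes from slicing $\mathbb{R}^n$ by hyperplanes and using unrestricted estimates on the hyperplanes.'' Your outline matches that gloss: slicing by the hyperplanes $\omega^{\perp}$ through the origin, using the integral-geometric identity $\int_{S^{n-1}}\int_{\omega^{\perp}}g \approx \int |x|^{-1}g$ to extract the factor $r^{-1}$ from the annular support, and applying the unrestricted $(n-1)$-dimensional estimate \eqref{result_n-1} on each slice. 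A check of exponents confirms that this mechanism yields exactly $r^{-1/p_{n-1}}\delta^{-h_{n-1}-\varepsilon}$, so the skeleton and the source of the gain are correctly identified.

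As a proof, however, the proposal is incomplete, and not only in the places you label ``bookkeeping.'' One intermediate bound is false as written: the inequality $\int_{\tau_\omega}|f| \le |\tau_\omega|_{n-1}\,(|f|_\omega)^{*}_{\delta}(\bar e_\omega)$ cannot hold when the slice tube $\tau_\omega$ has length $\ell \ll 1$, because $(\cdot)^{*}_{\delta}$ averages over \emph{unit-length} tubes; comparing the average over $\tau_\omega$ with the average over the unit tube containing it costs a factor $\ell^{-1}$ (take $f=\chi_{\tau_\omega}$), and your remedy of ``splitting into unit sub-tubes'' only works for tubes \emph{longer} than $1$ --- the degenerate $\delta$-ball slices would cost $\delta^{-1}$. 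One must show that short slices, and the slices where the $2$-plane $\Pi$ is nearly parallel to $\omega^{\perp}$ (so that $\TT_e(x_1,x_2)\cap\omega^{\perp}$ is not contained in a $C_n\delta$-tube), carry a negligible share of the weight $w\,d\omega$, or absorb these losses into the efficiency of the parallelogram. Similarly, passing from the ratio $\int_{\omega}(\int_{\omega^\perp}\chi_{\TT}|f|)\,d\omega \big/ \int_{\omega}|\TT\cap\omega^{\perp}|_{n-1}\,d\omega$ to a $w$-weighted average of tube averages requires $|\TT_e(x_1,x_2)\cap\omega^{\perp}|_{n-1}\gtrsim|\tau_\omega|_{n-1}$ on most of the weight, and the multiplicity claim for $e\mapsto\psi_e(\omega)$ is asserted rather than proved. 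These points are the actual content of Bourgain's lemma, not routine verification. In short, you have reconstructed the strategy of \cite[Lemma 1.52]{B91} and located the difficulty accurately, but what you have written is a roadmap that defers the decisive steps to \cite{B91} --- which puts you in the same position as the paper, namely citing the lemma rather than proving it.
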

For each $i=0, \dots, m-1$, let $x_i$ be the centre of the bush $\BB_i$. From geometric considerations (see Figure \ref{parapip}), if we move the blue tube from $\tube{\xi}{a_{\xi}}$ to the red tube $\tube{\xi}{a_0}$ following the arrows, the proportion of the intersection between the moving tube and the bush $\BB_i$ remains comparable to the proportion of the intersection $\BB_i \cap \TT_\xi (x_i, a_\xi)$ within $\TT_\xi (x_i, a_\xi)$. Therefore, we obtain
\begin{equation}
	\frac{1}{\abso{\tube{\xi}{a_\xi}}_n} \abso{\BB_i \cap \tube{\xi}{a_\xi}}_n
	\lesssim \frac{1}{\abso{\TT_\xi (x_i, a_\xi)}_n} \abso{\BB_i \cap \TT_\xi (x_i, a_\xi)}_n \label{bush_graph}
\end{equation}
for any $a_{\xi} \in \RR^n$.
\begin{figure}[htb]
		\centering
		\includegraphics[scale=0.45]{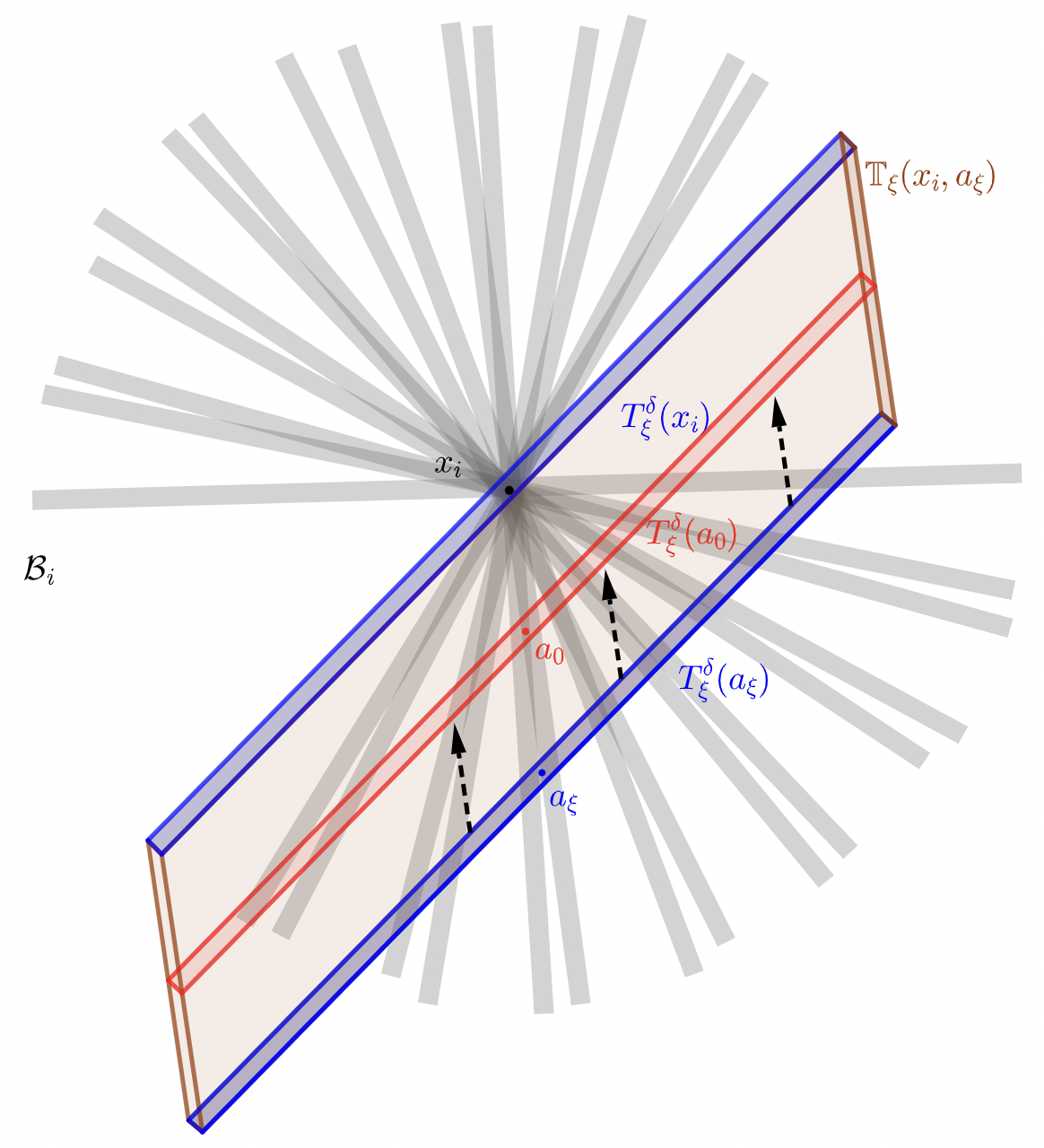} 
		\caption{Geometric Observation for \eqref{bush_graph}. $a_0$ is the midpoint of $x_i$ and $a_{\xi}$.}
		\label{parapip}
\end{figure}

Using \eqref{beforebourgainlemma}, we estimate
\begin{equation}
	\begin{aligned}
		\lambda &\lesssim \sum_{i=0}^{m-1} \frac{1}{\abso{\tube{\xi}{a_\xi}}_n} \abso{\BB_i \cap \tube{\xi}{a_\xi}}_n \\
		& \lesssim \sum_{i=0}^{m-1} \frac{1}{\abso{\TT_\xi (x_i, a_\xi)}_n} \abso{\BB_i \cap \TT_\xi (x_i, a_\xi)}_n \\
		& \leq \sum_{i=0}^{m-1} \cM_\delta (\chi_{\BB_i})(\xi) \\
		& \lesssim \sum_{i=0}^{m-1} \sum_{k=0}^{\log\frac{1}{\delta}} \cM_{\delta}(\chi_{\BB^k_i})(\xi), \label{bush_5}
	\end{aligned}
\end{equation}
where $\BB^k_i = (\BB_i - x_i) \cap (B(0,2^{k+1}\delta) \setminus B(0,2^k\delta))$.
Taking the exponent $p_{n-1}$ on both sides of \eqref{bush_5} and applying Jensen's inequality, we obtain
\begin{equation}
	\lambda^{p_{n-1}}\lesssim\pare{ \sum_{i=0}^{m-1} \sum_{k=0}^{\log\frac{1}{\delta}} \cM_{\delta}(\chi_{\BB^k_i})(\xi)}^{p_{n-1}} \leq \pare{m\log\frac{1}{\delta}}^{p_{n-1}-1}\sum_{i=0}^{m-1} \sum_{k=0}^{\log\frac{1}{\delta}} \pare{\cM_{\delta}(\chi_{\BB^k_i})(\xi)}^{p_{n-1}}.\label{highbush_1}
\end{equation}
Integrating \eqref{highbush_1} over $\overline{D}$, and by \eqref{measuredbar}, it follows that 
\begin{equation}
\begin{aligned}
	\epsilon_0 \lambda^{p_{n-1}} &\lesssim \pare{m \log \frac{1}{\delta}}^{p_{n-1}-1} \sum_{i=0}^{m-1} \sum_{k=0}^{\log\frac{1}{\delta}} \int_{\overline{D}} \pare{\cM_{\delta}(\chi_{\BB^k_i})(\xi)}^{p_{n-1}} d\xi\\
	& = \pare{m \log \frac{1}{\delta}}^{p_{n-1}-1} \sum_{i=0}^{m-1} \sum_{k=0}^{\log\frac{1}{\delta}} \nn{\cM_{\delta}(\chi_{\BB^k_i})}^{p_{n-1}}_{L^{p_{n-1}}(S^{n-1})}.  \label{bush_7}
\end{aligned}
\end{equation}
By Lemma \ref{bourgainlemma},
\begin{equation}
	 \epsilon_0 \lambda^{p_{n-1}} \lesssim_{\varepsilon} \pare{m \log \frac{1}{\delta}}^{p_{n-1}-1} \sum_{i=0}^{m-1} \sum_{k=0}^{\log\frac{1}{\delta}} (2^k\delta)^{-1} \delta^{-p_{n-1}h_{n-1}-\varepsilon} \abso{\BB^k_i}_n. \nonumber
\end{equation}
The geometric observation, see Figure \ref{partbushgraph}, demonstrates that
\begin{equation}
	\abso{\BB^k_i}_n \lesssim 2^k \delta \abso{\BB_i}_n. \label{partbushformula}
\end{equation}
From the graph, we can see that the brown tube intersects the blue annulus, and the portion of the brown tube inside the annulus has a length of $2^k \delta$. Additionally, multiple tubes overlap within the annulus, contributing to the estimate in equation \eqref{partbushformula}.
\begin{figure}[htb]
		\centering
		\includegraphics[scale=0.45]{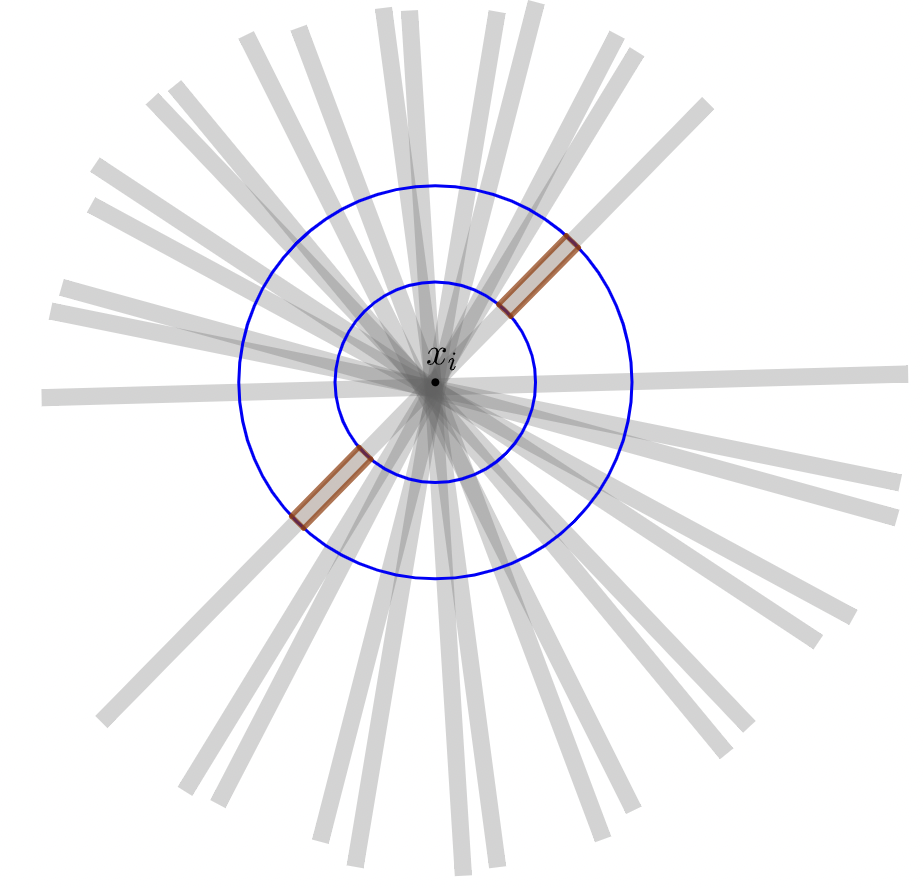} 
		\caption{Geometric Observation for \eqref{partbushformula}.}
		\label{partbushgraph}
\end{figure}

Substituting the stop condition in Proposition \ref{stoplemma} for $m$ and the first part of \eqref{333333} for $\summ{i=0}{m-1}\abso{\BB_i}_n$,  
\begin{equation}
\begin{aligned}
	\epsilon_0 \lambda^{p_{n-1}} &\lesssim_{\varepsilon} \pare{m \log \frac{1}{\delta}}^{p_{n-1}-1}  \sum_{k=0}^{\log\frac{1}{\delta}} (2^k\delta)^{-1} \delta^{-p_{n-1}h_{n-1}-\varepsilon} 2^k \delta \sum_{i=0}^{m-1}\abso{\BB_i}_n\\
	& \leq m^{p_{n-1}-1} \pare{\log \frac{1}{\delta}}^{p_{n-1}} \delta^{-p_{n-1}h_{n-1}-\varepsilon} \frac{\abso{E}_n}{\lambda}\\
	& \lesssim_{\varepsilon} \pare{\frac{1}{\epsilon_0} \abso{E}_n \delta^{-s-\varepsilon} \lambda^{-n}}^{p_{n-1}-1}\delta^{-p_{n-1}h_{n-1}-2\varepsilon} \frac{\abso{E}_n}{\lambda}. \nonumber
\end{aligned}
\end{equation}
Then it follows that  
\begin{equation}
	\lambda \epsilon_0^{\frac{p_{n-1}}{p_{n-1}+n(p_{n-1}-1)+1}} \lesssim_{\varepsilon} \delta^{-\frac{p_{n-1}h_{n-1}+(s+\varepsilon)(p_{n-1}-1)+2\varepsilon}{p_{n-1}+n(p_{n-1}-1)+1}}\abso{E}_n^{\frac{p_{n-1}}{p_{n-1}+n(p_{n-1}-1)+1}},\nonumber
\end{equation}
which is the desired \eqref{mainresultformula_2}, albeit with a different $\varepsilon$.
\section*{Acknowledgements}

We thank Tam\'as Keleti for telling us the proof of Proposition A (2) .


\begin{thebibliography}{e}


\bibitem[B91]{B91} J. Bourgain. 
Besicovitch type maximal operators and applications to Fourier analysis, \emph{Geom. Func. Anal.}, {\bf1}, (1991), 147--187.

  \bibitem[C77]{C77} A. C\'ordoba. 
  The Kakeya maximal function and the spherical summation multipliers, {\em Amer. J. Math.}, {\bf 9}, (2023), 1--22.
  
  \bibitem[D71]{D71} R. Davies. 
  Some remarks on the Kakeya problem, \emph{Math. Proc. Cambridge Philos. Soc.}, \textbf{69}, (1971), 417--421.

\bibitem[F14]{falconer} K. J. Falconer.
{\em Fractal Geometry: Mathematical Foundations and Applications},
 John Wiley \& Sons, Hoboken, NJ, 3rd. ed., (2014).

\bibitem[HRZ22]{HRZ22} J. Hickman, K. Rogers and R. Zhang.
Improved bounds for the Kakeya maximal conjecture in higher dimensions, {\em Amer. J. Math}, {\bf 144}, (2022), 1511--1560.



  \bibitem[KLT00]{KLT00} N. Katz, L. \L{}aba  and T. Tao. 
  An improved bound on the Minkowski dimension of Besicovitch sets in $\mathbb{R}^3$, \emph{Ann. Math.}, {\bf 152}, (2000), 383--446.


  \bibitem[KT02]{KT02} N. Katz and T. Tao. 
  New bounds for Kakeya problems, {\em J. Anal. Math}, {\bf 87}, (2002), 231--263.

\bibitem[KZ19]{KZ19} N. Katz and J. Zahl.
An improved bound on the Hausdorff dimension of Besicovitch sets in $\mathbb{R}^3$, {\em J. Amer. Math. Soc.}, {\bf 32}, (2019),  195--259.

\bibitem[KZ21]{KZ19+} N. Katz and J. Zahl.
A Kakeya maximal function estimate in four dimensions using planebrushes, {\em Rev. Mat. Iberoam.}, {\bf 37}, (2021),  317--359.


 \bibitem[M12]{M12} P. Mattila.
\textit{Topics in geometric Fourier analysis}, lecture notes (2012).  
Available at: \url{https://wiki.helsinki.fi/xwiki/bin/view/mathstatKurssit/Kevät%202012/Topics%20in%20geometric%20Fourier%20analysis%2C%20spring%202012/}.


  
\bibitem[M15]{M15} P. Mattila.
{\em Fourier analysis and Hausdorff dimension},
Vol. 150. Cambridge University Press, (2015).

\bibitem[M95]{M95} P. Mattila.
{\em Geometry of Sets and Measures in Euclidean Spaces: Fractals and Rectifiability},
Vol. 44. Cambridge University Press, (1995).

\bibitem[WZ25+]{WZ25} H. Wang and J. Zahl.
Volume estimates for unions of convex sets, and the Kakeya set conjecture in three dimensions, preprint, available at: \href{https://arxiv.org/abs/2502.17655}{arXiv:2502.17655}.


\bibitem[W95]{W95} T. Wolff. 
An improved bound for Kakeya type maximal functions,  \emph{Rev. Mat. Iberoam.}, {\bf11}, (1995), 651--674.


\bibitem[W03]{wlecturenotes} T. Wolff. 
{\em Lectures on Harmonic Analysis}, American Mathematical Soc., (2003).









\end{thebibliography}
\end{document}